\tikzset{node distance=3cm, auto}
\newcommand{ \R } { \mathbb{R} }
\newcommand{ \N } { \mathbb{N} }
\newcommand{\Q} {\mathbb{Q}}
\newcommand{\Hstar}{\mathbb{H}^*}
\newcommand{\Rstar}{\mathbb{R}^*}
\newcommand{\w}{\omega}
\newcommand{\cont}{\mathfrak{c}}
\newcommand{\script}{\mathcal}
\newcommand{\parentheses}[1]{{\left( {#1} \right)}}
\newcommand{\p}{\parentheses}
\newcommand{\of}{\parentheses}
\newcommand{\closure}[1]{\overline{#1}}
\newcommand{\closureIn}[2]{\closure{#1}^{#2}}
\newcommand{\interior}[1]{\mathrm{int}\of{#1}}
\newcommand{\interiorIn}[2]{\mathrm{int}_{#2}\of{#1}}
\newcommand{\Set}[1]{{\left\lbrace {#1} \right\rbrace}}
\newcommand{\singleton}{\Set}
\newcommand{\Union}{\bigcup}
\newcommand{\cardinality}[1]{{\left\lvert {#1} \right\rvert}}
\def\set#1:#2{\Set{{#1} \colon {#2}}}
\newcommand{\singletonDeletion}[1]{\script{D}\parentheses{#1}}
\newcommand{\localends}[1]{{\textnormal{ends}\p{#1}}}
\newcommand{\Remainder}[2]{\script{R}_{#1}\parentheses{#2} }
\begin{document}
\title[Reconstructing Topological Graphs and Continua]{Reconstructing Topological Graphs and Continua}
\author[Gartside, Pitz, Suabedissen]{Paul Gartside, Max F.\ Pitz, Rolf Suabedissen}
\thanks{This research formed part of the second author's thesis at the University of Oxford \cite{thesis}, supported by an EPSRC studentship.}
\address{The Dietrich School of Arts and Sciences\\301 Thackeray Hall\\Pittsburgh PA 15260}
\address{Mathematical Institute\\University of Oxford\\Oxford OX2 6GG\\United Kingdom}
\email{gartside@math.pitt.edu}
\email[Corresponding author]{max.f.pitz@gmail.com}
\email{rolf.suabedissen@maths.ox.ac.uk}
\subjclass[2010]{Primary 05C60, 54E45; Secondary 54B05, 54D05, 54D35, 54F15}
\keywords{Reconstruction conjecture, topological reconstruction, point-complement subspaces, finite compactifications}

\begin{abstract} The deck of a topological space $X$ is the set $\singletonDeletion{X}=\set{[X \setminus \singleton{x}]}:{x \in X}$, where $[Z]$ denotes the homeomorphism class of $Z$. A space $X$ is \emph{topologically reconstructible}  if whenever $\singletonDeletion{X}=\singletonDeletion{Y}$ then  $X$ is homeomorphic to $Y$. 

It is shown that all metrizable compact connected spaces are reconstructible. It follows that all finite graphs, when viewed as a 1-dimensional cell-complex, are reconstructible in the topological sense, and more generally, that all compact graph-like spaces are reconstructible. 
\end{abstract}

\maketitle
\thispagestyle{plain}

\newtheorem*{recresult}{Theorem \ref{fincompactrec}}
\newtheorem{mythm}{Theorem} \numberwithin{mythm}{section} 
\newtheorem{myprop}[mythm]{Proposition}
\newtheorem{myobs}[mythm]{Observation}
\newtheorem{mycor}[mythm]{Corollary}
\newtheorem{mylem}[mythm]{Lemma} 
\newtheorem{myquest}[mythm]{Question} 
\newtheorem*{myconj}{Conjecture}
\newtheorem{mydef}[mythm]{Definition}
\newtheorem{myclaim}[mythm]{Claim}
\newtheorem{myremark}[mythm]{Remark}
\newtheorem{mycase}{Case}
\newtheorem{mycase2}{Case}
\newtheorem*{myclmn}{Claim}

\section{Introduction}

The well-known \emph{Graph Reconstruction Conjecture} asks if every non-trivial finite graph can be reconstructed from its deck of cards. More precisely, if $G$ is a finite graph and $x$ a vertex of $G$, then let $G-x$ be the graph obtained by deleting $x$ and all incident edges from $G$, and write $[G-x]$ for the isomorphism class of $G-x$. Then each $[G-x]$ is called a \emph{card} of $G$, and the set of all cards, $\singletonDeletion{G}=\{[G-x] : x \in G\}$, is the \emph{deck} of $G$. In 1941, Ulam and Kelly conjectured  that if $G$ is finite graph with at least three vertices and $H$ is another finite graph with the same deck, $\singletonDeletion{G}=\singletonDeletion{H}$, then $G$ and $H$ are isomorphic.  (For a survey of the reconstruction conjecture see for example \cite{Bondy}.)

In this paper we introduce and give a positive solution to an analogous topological reconstruction problem for finite topological graphs, and many other spaces including all metric continua (compact, metrizable connected spaces). For a space $X$ we denote by $[X]$ the homeomorphism class of $X$. The deck of $X$ is the set $\singletonDeletion{X}=\set{[X \setminus \singleton{x}]}:{x \in X}$, and cards of $X$ correspond to elements of the deck of $X$. The \emph{topological reconstruction problem} then asks for necessary and sufficient conditions on a space to be reconstructible from its deck---when does $\singletonDeletion{X}=\singletonDeletion{Y}$ imply $X$ homeomorphic to $Y$? (For notational convenience we often write `$Z$' instead of `$[Z]$' when talking about cards. In practice this causes no confusion.)

If we delete a point from the real line $\R$, we get a space homeomorphic to two copies of the real line. In other words, $\singletonDeletion{\R}=\Set{\R \oplus \R}$. Similarly $\singletonDeletion{\R^n}=\Set{\R^n \setminus \singleton{0}}$ for all $n \geq 1$. In the case of the unit interval $I$ we have $\singletonDeletion{I}=\Set{[0,1), [0,1) \oplus [0,1)}$. For the sphere, stereographic projection gives $\singletonDeletion{S^n}=\Set{\R^n}$. The second two authors have shown, \cite{recpaper}, that all these spaces are reconstructible, as are the rationals, which has deck $\singletonDeletion{\Q} = \Set{\Q}$, and the irrationals, $P$, with deck $\singletonDeletion{P}=\Set{P}$.

However, the Cantor set has  deck $\singletonDeletion{C}=\singleton{C \setminus \singleton{0}}$, and this coincides with the deck of $C \setminus \singleton{0}$. Hence the Cantor set is not reconstructible. Define  a property $\script{P}$ of topological spaces to be reconstructible if for all spaces $X$ and $Z$, $\singletonDeletion{X}=\singletonDeletion{Z}$ implies `$X$ has $\script{P}$ if and only if $Z$ has $\script{P}$'. Then the above example also shows that compactness is not a reconstructible property.

A \emph{continuum} is a compact, connected  Hausdorff space. The \emph{weight} of a space $X$, denoted $w(X)$, is the minimal size of a basis for the topology of $X$. Our key reconstruction result (Theorem~\ref{bigreconstructionresult}) is that a continuum $X$ is reconstructible if $w(X)<|X|$. 
Since non-trivial metrizable continua have countable weight but uncountable cardinality, we deduce: every compact Hausdorff space containing a metrizable subcontinuum with non-empty interior is reconstructible (Corollary~\ref{met_subctm}), and every compact metrizable space which is the countable union of connected subsets is reconstructible (Corollary~\ref{cptmet_ctbl_conn}). It immediately follows that  any finite sum of metrizable continua is reconstructible, including all finite topological  graphs. 

An interesting open problem is whether infinite (in other words, non-compact) topological graphs are topologically reconstructible.  Although infinite counterexamples to the (Graph) Reconstruction Conjecture are not hard to find, it is another longstanding open problem to determine whether all infinite, locally finite connected graphs are reconstructible, see Problem~1 in \cite{nash}. We can show (Corollary~\ref{graph_like}) that  compact graph-like spaces are reconstructible, and so the topological reconstruction of infinite graphs is possible provided we include information about the ends of the graph. 

It is also unclear to the authors whether the restriction in the key Theorem~\ref{bigreconstructionresult} to continua with weight strictly less than cardinality is necessary. We are able to show that in many cases it is not needed. In \cite{part2} the authors build on the work of this paper and characterise exactly when compact metrizable spaces are reconstructible.

The paper is organised as follows. Section~\ref{section2} contains the basic background results on reconstructing properties necessary for our later discussion. 
In Section~\ref{section3}, we recall elements of the theory of finite compactifications, and then prove our fundamental reconstruction result, connecting reconstructibility of compact Hausdorff spaces to the existence of a card with a maximal finite compactification. In Section~\ref{section4}, we show that every Hausdorff continuum $X$ with $w(X) < \cardinality{X}$ has a card with a maximal 1- or 2-point compactification and hence is reconstructible. The crucial technical step towards this result is an internal characterisation when removing a point from a connected compact space leaves behind a card  with a maximal finite compactification, connecting the topological reconstruction problem to classical work by Whyburn from the 1930's on special points in subsets of the plane.
The last section, Section~\ref{section5}, contains examples illustrating the previous results. We also discuss why it follows that every graph, and more generally, every compact graph-like space is reconstructible in the topological sense. For all standard topological notions see Engelking's \emph{General Topology} \cite{Eng}. To exclude trivial counter-examples, we will assume that all spaces discussed in this paper contain at least three points.

\section{Topologically reconstructible properties}
\label{section2}

This section contains background results on topological reconstruction. Most importantly, we see that local properties, so topological properties like local compactness or local connectedness, are reconstructible in $T_1$ spaces, and further, that for compact Hausdorff spaces, reconstructing compactness is equivalent to reconstructing the space itself. Additional information on the topological reconstruction problem can be found in \cite{recpaper}.

\subsection{Reconstructing separation axioms} 
Recall  that a $T_1$\emph{-space} is a topological space in which singletons are closed, and a \emph{Hausdorff space} is a topological space in which every two points can be separated by disjoint open subsets (and see \cite{Eng} for other separation axioms).

\begin{mythm}[{\cite[3.1]{recpaper}}]
\label{mythm8}
For topological spaces containing at least three points, hereditary separation axioms are reconstructible. Normality (resp.\ paracompactness) is reconstructible provided that the space has at least one normal (paracompact) card. \qed
\end{mythm}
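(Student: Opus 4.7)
The plan is to split the argument into three steps: (i) reconstruct the $T_1$ axiom directly using the three-point hypothesis; (ii) bootstrap from $T_1$ to every other hereditary separation axiom by noting that the cards then become open subspaces of $X$; and (iii) handle the non-hereditary cases of normality and paracompactness using a distinguished normal (respectively paracompact) card together with the regularity reconstructed in step (ii).

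For step (i), one direction is immediate from heredity. Conversely, suppose every card is $T_1$ and fix $x \in X$. Choose distinct points $y, z \in X \setminus \{x\}$, possible since $|X| \geq 3$. Because $\{x\}$ is closed in $X \setminus \{y\}$, there is an open set $V_y$ of $X$ with $V_y \cap (X \setminus \{y\}) = X \setminus \{x, y\}$, and hence $V_y = X \setminus \{x\}$ or $V_y = X \setminus \{x, y\}$. A symmetric statement holds for $z$. If either $V_y$ or $V_z$ equals $X \setminus \{x\}$ we are done; otherwise $X \setminus \{x, y\}$ and $X \setminus \{x, z\}$ are both open in $X$, and their union $X \setminus \{x\}$ is open, so again $\{x\}$ is closed.

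For step (ii), once $X$ is $T_1$ every card $X \setminus \{x\}$ is open in $X$, and therefore any open set of a card is automatically open in $X$. Hausdorffness follows by picking, for each $a \neq b$, a third point $c$ and separating $a, b$ inside $X \setminus \{c\}$. For regularity, given a point $x$ and a closed set $F$ with $x \notin F$, pick any $y \in X \setminus (F \cup \{x\})$ and separate in the card $X \setminus \{y\}$; the only remaining case $F = X \setminus \{x\}$ forces $x$ to be isolated and is trivial. The same template covers complete regularity and any other hereditary separation axiom.

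For step (iii), assume $X \setminus \{p\}$ is normal; step (ii) has already given that $X$ is regular. Given disjoint closed sets $A, B \subseteq X$, their traces $A \setminus \{p\}$ and $B \setminus \{p\}$ are closed and disjoint in the card, and can be separated there by disjoint opens $U, V$; these are open in $X$ because $\{p\}$ is closed. If $p \notin A \cup B$ we are done. Otherwise, say $p \in A$, and use regularity to pick disjoint opens $W \ni p$ and $W' \supseteq B$; then $U \cup W$ and $V \cap W'$ are disjoint opens separating $A$ from $B$. Paracompactness is analogous: pull an open cover of $X$ back to the paracompact card, extract a locally finite open refinement, and absorb $p$ using a regularly-chosen small neighborhood that preserves local finiteness at $p$. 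The sole obstacle throughout is this last adjustment at the removed point, which is precisely what makes the availability of regularity essential.
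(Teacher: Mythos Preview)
The paper does not actually prove this theorem: it is quoted from \cite{recpaper} and closed with a \qed, so there is no in-paper argument to compare against. Your proposal therefore stands or falls on its own.

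Steps (i) and (ii) are correct and cleanly written. Step (iii) is also correct in substance, but the sentence ``step (ii) has already given that $X$ is regular'' is misleading as written. From a \emph{single} normal card you cannot conclude that $X$ is regular, or even $T_1$: for instance, take $X=\{a,b,c\}$ with open sets $\emptyset,\{c\},\{a,c\},\{b,c\},X$; then $X$ is not $T_1$, yet the card $X\setminus\{c\}$ is discrete and hence normal. What step (ii) really gives you is that regularity is \emph{deck-invariant}. The intended flow is: assume $X$ is normal and show any $Z$ with $\mathcal D(Z)=\mathcal D(X)$ is normal; then $X$ is $T_3$, hence every card is $T_3$, hence $Z$ is $T_3$ by step (ii), and \emph{now} your separation argument at the deleted point $p$ applies to $Z$. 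You should make this order of quantifiers explicit.

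The paracompactness sketch is terse but can be made rigorous along the lines you indicate: given an open cover $\mathcal U$ of $Z$ and a paracompact card $Z\setminus\{p\}$, take a locally finite open refinement $\mathcal V$ of the restricted cover, use regularity to choose $p\in W\subseteq\overline W\subseteq U_0$ for some $U_0\in\mathcal U$, and replace $\mathcal V$ by $\{V\setminus\overline W:V\in\mathcal V\}\cup\{U_0\}$. This is a locally finite open refinement of $\mathcal U$ in $Z$, since $W$ meets none of the sets $V\setminus\overline W$. It would be worth writing this out rather than leaving it as a one-line gesture.
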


In particular, it follows that being a $T_1$ or a Hausdorff space is reconstructible.

\subsection{Reconstructing local properties}

For a topological property $\mathcal{P}$, a topological space $X$ and a point $x \in X$, we say that $X$ is  \emph{locally} $\mathcal{P}$ \emph{at} $x$ (and write $(x,X) \vDash \mathcal{P}$) if for every open neighbourhood $U$ of $x$ there is $A \subset X$ such that $x \in \interior{A} \subset A \subset U$ and $A$ is $\mathcal{P}$. Similarly, we say $X$ is   \emph{locally} $\mathcal{P}$ if $X$ is locally $\mathcal{P}$ at $x$ for all $x \in X$. 

Examples of local properties are local compactness, local connectedness and local metrizability, but also seemingly global properties like having no isolated points or being zero-dimensional in $T_1$ spaces.

\begin{mylem}
\label{localproperty2}
In the realm of $T_1$-spaces, the number of points $x$ such that the space is (not) locally $\mathcal{P}$ at $x$ is reconstructible for all topological properties $\mathcal{P}$.
\end{mylem}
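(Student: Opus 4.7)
The plan rests on a single observation made possible by the $T_1$ hypothesis: if $x \neq y$ in $X$, then $\Set{y}$ is closed, so $X \setminus \Set{y}$ is an open neighbourhood of $x$ in $X$. Since being locally $\mathcal{P}$ at $x$ is decided by arbitrarily small neighbourhoods of $x$, and arbitrarily small neighbourhoods of $x$ lie inside $X \setminus \Set{y}$, we get the equivalence
\[
\p{x, X} \vDash \mathcal{P} \iff \p{x, X \setminus \Set{y}} \vDash \mathcal{P}.
\]

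For each card $C = X \setminus \Set{y}$ set $n(C) = \cardinality{\set{x \in C}:{\p{x, C} \vDash \mathcal{P}}}$. The equivalence above identifies $n(C_y)$ with the number of $x \in X \setminus \Set{y}$ at which $X$ itself is locally $\mathcal{P}$. As $n(C)$ is manifestly a homeomorphism invariant of $C$, it is determined by the deck. Writing $N = \cardinality{\set{x \in X}:{\p{x, X} \vDash \mathcal{P}}}$, we obtain $n(C_y) = N$ when $X$ is not locally $\mathcal{P}$ at $y$, and $n(C_y) = N - 1$ otherwise; in particular $n(C_y) \leq N$ always.

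To read $N$ off the deck, distinguish two cases. If every card class $[C]$ is \emph{fully} locally $\mathcal{P}$---that is, $\p{x, C} \vDash \mathcal{P}$ for every $x \in C$---then by the equivalence every $x \in X$ is locally $\mathcal{P}$, so $N = \cardinality{X}$. Otherwise some card $C$ contains a point $x_0$ at which it is not locally $\mathcal{P}$; then $X$ itself is not locally $\mathcal{P}$ at $x_0$, so the card $C_{x_0}$ realises $n(C_{x_0}) = N$, and consequently $N = \sup\set{n(C)}:{[C] \in \singletonDeletion{X}}$. Since $\cardinality{X}$ is itself reconstructible (it equals $\cardinality{C}$ when $X$ is infinite and $\cardinality{C} + 1$ when $X$ is finite), both $N$ and the complementary number $\cardinality{X} - N$ of points at which $X$ is not locally $\mathcal{P}$ are determined by the deck.

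I do not anticipate a serious obstacle: the whole argument collapses onto the opening $T_1$-neighbourhood observation together with a short case distinction. The only mildly delicate point is verifying the case analysis uniformly across finite and infinite cardinalities, since when $N$ is an infinite cardinal the values $N$ and $N-1$ coincide in cardinal arithmetic; this collapses the two subcases of the formula $n(C_y) = N - [\p{y, X} \vDash \mathcal{P}]$ but causes no trouble in the recovery procedure above.
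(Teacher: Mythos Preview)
Your proof is correct and follows essentially the same route as the paper: both hinge on the $T_1$-equivalence $(x,X)\vDash\mathcal{P}\iff(x,X\setminus\{y\})\vDash\mathcal{P}$ and then recover $N$ as the maximum of the card counts, with a separate treatment of the extremal case in which every point is locally~$\mathcal{P}$. The only cosmetic differences are that the paper disposes of finite $X$ upfront (finite $T_1$ spaces are discrete, hence reconstructible) while you handle it via reconstructibility of~$|X|$, and that your closing ``$|X|-N$'' should be read as rerunning the symmetric argument rather than as literal cardinal subtraction.
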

\begin{proof}
Let $X$ be a $T_1$-space. Noting that local properties are hereditary with respect to open subspaces, and a neighbourhood basis of $x$ in the card $X \setminus \Set{y}$ is a neighbourhood basis of $x$ in $X$, we have
$$(x,X) \vDash \mathcal{P} \; \textnormal{ if and only if } \; (x,Y) \vDash \mathcal{P}$$ for all cards $Y = X \setminus \singleton{y}$ with $y \neq x$. Since discrete spaces are reconstructible, we may assume without loss of generality that $X$ is infinite. It follows 
\[\cardinality{\set{x \in X}:{(x,X) \vDash \mathcal{P}}} = \max \set{ \cardinality{\set{y \in Y}:{(y,Y) \vDash \mathcal{P}}}}:{Y \in \singletonDeletion{X}}.\]
Indeed, by our initial observation, we have $\geq$ always. If $\cardinality{\set{x \in X}:{(x,X) \vDash \mathcal{P}}}$ is finite, then for any $y \in X$ with $(y,X) \not\vDash \mathcal{P}$, the card $Y=X \setminus \singleton{y}$ witnesses equality in the above equation. And if $\cardinality{\set{x \in X}:{(x,X) \vDash \mathcal{P}}}$ is infinite, then every card $Y \in \singletonDeletion{X}$ witnesses equality. 
\end{proof}

\begin{mycor}[{\cite[3.3]{recpaper}}]
\label{localproperty}
In the realm of $T_1$-spaces, ``being locally $\mathcal{P}$'' is reconstructible for all topological properties $\mathcal{P}$. \qed
\end{mycor}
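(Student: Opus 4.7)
The corollary is an immediate consequence of Lemma~\ref{localproperty2}. By definition, a $T_1$-space $X$ is locally $\mathcal{P}$ precisely when $(x,X) \vDash \mathcal{P}$ holds for every $x \in X$, equivalently when
\[\cardinality{\set{x \in X}:{(x,X) \vDash \mathcal{P}}} = \cardinality{X}.\]
The plan is to note that both sides of this equation are reconstructible from the deck $\singletonDeletion{X}$, and hence so is the equality itself.

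The left-hand side is precisely the quantity that Lemma~\ref{localproperty2} addresses: it is expressed there as the maximum, over cards $Y \in \singletonDeletion{X}$, of $\cardinality{\set{y \in Y}:{(y,Y) \vDash \mathcal{P}}}$, and therefore depends only on $\singletonDeletion{X}$. The right-hand side $\cardinality{X}$ is also a deck invariant, since the cardinality of any card determines $\cardinality{X}$ in the infinite case (both cardinalities agree), while finite $T_1$-spaces are discrete and hence reconstructible outright, as already exploited in the proof of Lemma~\ref{localproperty2}.

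Combining these observations, if $X$ and $Z$ are $T_1$-spaces with $\singletonDeletion{X} = \singletonDeletion{Z}$, then both sides of the displayed equation coincide between $X$ and $Z$, so $X$ is locally $\mathcal{P}$ if and only if $Z$ is. This is exactly the reconstructibility statement. The argument simply specialises the quantitative content of Lemma~\ref{localproperty2} to the all-or-nothing question posed by the global property of being locally $\mathcal{P}$; there is no substantial obstacle beyond what has already been handled by that lemma.
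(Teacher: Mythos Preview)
Your argument contains a genuine gap. The displayed equivalence
\[
\cardinality{\set{x \in X}:{(x,X) \vDash \mathcal{P}}} = \cardinality{X}
\]
does \emph{not} characterise ``$X$ is locally $\mathcal{P}$'' for infinite $X$. If $X$ is infinite and fails to be locally $\mathcal{P}$ at a single point (or at any subset of strictly smaller cardinality), the left-hand side still equals $\cardinality{X}$, yet $X$ is not locally $\mathcal{P}$. So even though you correctly argue that both sides of the displayed equation are deck invariants, the equation itself is the wrong object to reconstruct.

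The fix is already contained in Lemma~\ref{localproperty2}: the parenthetical ``(not)'' there says that the number of points at which $X$ is \emph{not} locally $\mathcal{P}$ is equally reconstructible (apply the lemma to the negated property). Then ``$X$ is locally $\mathcal{P}$'' is exactly the statement $\cardinality{\set{x \in X}:{(x,X) \not\vDash \mathcal{P}}}=0$, and this number is determined by the deck. That is the intended one-line derivation, and it avoids the cardinality pitfall above.
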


\subsection{Reconstruction and compactness} 

Even though compactness is not reconstructible, compactness can be helpful for reconstructing spaces.

\begin{mythm}[{\cite[5.1]{recpaper}}]
\label{mythm20}
Every compact Hausdorff space containing isolated points is reconstructible. \qed
\end{mythm}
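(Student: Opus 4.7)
The plan is to exploit the reconstructibility of Hausdorffness (Theorem~\ref{mythm8}) together with the elementary fact that a compact subspace of a Hausdorff space is closed.

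Assume $\singletonDeletion{X}=\singletonDeletion{Y}$. First I would invoke Theorem~\ref{mythm8} to conclude that $Y$ is Hausdorff. Next I would pick an isolated point $x \in X$, which exists by hypothesis. Since $\singleton{x}$ is clopen in $X$, the card $X \setminus \singleton{x}$ is a closed subspace of the compact space $X$, hence itself compact; in particular $\singletonDeletion{X}$ contains a compact card.

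The key step is to transfer this card back to $Y$: because $[X \setminus \singleton{x}] \in \singletonDeletion{Y}$, there exists $y \in Y$ such that $Y \setminus \singleton{y}$ is homeomorphic to $X \setminus \singleton{x}$, and hence $Y \setminus \singleton{y}$ is a compact subspace of the Hausdorff space $Y$. It must therefore be closed in $Y$, which forces $\singleton{y}$ to be open and $y$ to be isolated in $Y$. Then both $X$ and $Y$ split as disjoint topological sums, $X = (X \setminus \singleton{x}) \oplus \singleton{x}$ and $Y = (Y \setminus \singleton{y}) \oplus \singleton{y}$, and matching the homeomorphic pieces yields a homeomorphism $X \to Y$.

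I do not anticipate a serious obstacle here; the whole argument rests only on the reconstructibility of the Hausdorff axiom and on the clopen behaviour of isolated points in compact Hausdorff spaces. The conceptual point worth highlighting is that a single compact card of $Y$ simultaneously certifies that $Y$ itself is compact and that the removed point was isolated---which is precisely what rescues the argument even though compactness alone is not a reconstructible property.
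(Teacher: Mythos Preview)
Your argument is correct and is precisely the approach sketched in the paper: reconstruct Hausdorffness via Theorem~\ref{mythm8}, observe that the compact card $X\setminus\{x\}$ must reappear as some $Y\setminus\{y\}$, and then use that a compact subspace of a Hausdorff space is closed to force $y$ to be isolated, whence $X \cong (X\setminus\{x\})\oplus\{x\} \cong (Y\setminus\{y\})\oplus\{y\} \cong Y$.
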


Indeed, this follows from Theorem \ref{mythm8} plus the fact that a card of a Hausdorff space is compact only if the deleted point was isolated, as compact subsets of Hausdorff spaces are closed.

The next result is essentially due to the uniqueness of the Alexandroff 1-point compactification.
\begin{mythm}[{\cite[5.2]{recpaper}}]
\label{compactreconstruction} \label{thm:allReconstructionsCompact}
If a compact Hausdorff space only has compact reconstructions then it is reconstructible. \qed
\end{mythm}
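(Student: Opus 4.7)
The plan is to exploit the uniqueness of the Alexandroff one-point compactification. Let $Y$ be any reconstruction of $X$; by hypothesis, $Y$ is a compact Hausdorff space with $\singletonDeletion{Y} = \singletonDeletion{X}$, and the goal is to produce a homeomorphism $X \cong Y$.

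First, I would handle the isolated-point case: if $X$ has an isolated point, Theorem~\ref{mythm20} already gives reconstructibility, so we may assume $X$ has no isolated points. I then need to argue that $Y$ has no isolated points either. This follows from the observation that in a compact Hausdorff space, removing a non-isolated point $x$ leaves a non-closed, hence non-compact, open subspace; so if $X$ has no isolated points, every card in $\singletonDeletion{X}$ is non-compact. If on the other hand $y \in Y$ were isolated, then $Y \setminus \singleton{y}$ would be a closed, thus compact, card—contradicting $\singletonDeletion{Y} = \singletonDeletion{X}$. (Alternatively, Corollary~\ref{localproperty} applied to ``being locally a one-point space'' transfers the absence of isolated points from $X$ to $Y$.)

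Now fix any point $x \in X$ and let $C = X \setminus \singleton{x}$. Since $\singletonDeletion{X} = \singletonDeletion{Y}$, there is some $y \in Y$ with $Y \setminus \singleton{y} \cong C$. Because $X$ is compact Hausdorff, $C$ is open in $X$ and therefore locally compact Hausdorff; because $x$ is not isolated, $C$ is non-compact. The open neighbourhoods of $x$ in $X$ are exactly the complements of compact subsets of $C$, so $X$ is an Alexandroff one-point compactification of $C$. The same reasoning, applied in $Y$ at the non-isolated point $y$, shows that $Y$ is also an Alexandroff one-point compactification of $C$. By the uniqueness (up to homeomorphism fixing $C$) of this compactification for locally compact Hausdorff non-compact spaces, $X \cong Y$, and we are done.

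The proof is really driven by a single idea, so there is no serious obstacle; the only point requiring care is the bookkeeping in the second paragraph to ensure both $X$ and $Y$ are genuinely one-point compactifications of a common card, i.e.\ that the deleted points on each side are non-isolated. Once that is in place, uniqueness of the Alexandroff compactification does the rest.
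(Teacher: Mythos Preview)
Your proof is correct and follows exactly the approach indicated in the paper, which attributes the result to the uniqueness of the Alexandroff one-point compactification and cites \cite[5.2]{recpaper} without reproducing the argument. Your handling of the isolated-point case via Theorem~\ref{mythm20} and the verification that both $X$ and $Y$ are one-point compactifications of a common card is precisely what is needed.
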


\section{Connecting reconstructibility and finite compactifications}

\label{section3}

This section is devoted to establishing the following sufficient condition for reconstructibility: every compact Hausdorff space that has a card with a maximal finite compactification is reconstructible (Theorem~\ref{fincompactrec}).

\subsection{Maximal finite compactifications}

A Hausdorff compactification $\gamma X$ of a space $X$ is called an $N$\emph{-point compactification} (for $N \in \N$) if its remainder $\gamma X \setminus X$ has cardinality $N$.  A \emph{finite compactification} of $X$ is an $N$-point compactification for some $N \in \N$. If a space has an $N$-point compactification, then by identifying points in the remainder we can obtain $M$-point compactifications for all $1 \leq M \leq N$. 

We say $\nu X$ is a \emph{maximal} $N$-point compactification if no other finite compactification $\gamma X$ has a strictly larger remainder, i.e.\ whenever $\cardinality{\gamma X \setminus X}=M$ then $M \leq N$. We agree on the convention that a compact Hausdorff space has a maximal $0$-point compactification. Here and in the following, we denote by $\alpha X$ the Alexandroff 1-point compactification of a locally compact Hausdorff space $X$. Note that a space $X$ has a finite compactification if and only if it is locally compact and Hausdorff.  For general information on compactifications, we suggest \cite[\S3.5]{Eng} or \cite{Chandler}.

There are many examples of topological spaces with a maximal finite compactification. For example, the real line has a maximal $2$-point compactification, and Euclidean spaces $\R^n$ for $n \geq 2$ have a maximal $1$-point compactification (see \cite[\S3]{Magill1} or \cite[6L.3]{Rings}).  Similarly, if we view a finite graph $G$ as a 1-dimensional cell-complex and delete a vertex $v$ then $G \setminus \singleton{v}$ has a maximal $N$-point compactification where $N=\operatorname{deg}(v)$.
 For different types of examples, note that the first uncountable ordinal $\w_1$ and, more generally, every ordinal $\alpha$ of uncountable cofinality are spaces with a maximal $1$-point compactification, because their Stone-\v{C}ech compactification is $\alpha +1$. A similar example is given by the Tychonoff plank \cite[3.12.20]{Eng}, which also has a one-point Stone-\v{C}ech remainder.

%It is easy to see that if a space has an $N$-point compactification, then by identifying points in the remainder we can obtain $M$-point compactifications for all $1 \leq M \leq N$. This observation has a partial converse: if $\nu X$ is a maximal finite compactification of $X$, then every other finite compactification is a quotient of $\nu X$. 

In order to relate one further class of examples of maximal finite compactifications, recall that the \emph{Freudenthal compactification} of a locally compact space is the unique maximal compactification with zero-dimensional remainder \cite[VI.3.7]{Aarts}. As finite remainders are discrete (and hence zero-dimensional), we obtain the following result.

\begin{mythm}
A locally compact Hausdorff space has a maximal finite compactification if and only if it has a finite Freudenthal compactification; in this case, both compactifications are equivalent. \qed
\end{mythm}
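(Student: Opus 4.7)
The plan is to prove the two directions essentially by exploiting the fact that a finite Hausdorff space is discrete, hence zero-dimensional, so that every finite compactification of $X$ is dominated by the Freudenthal compactification $\varphi X$.

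For the easy direction ($\Leftarrow$): Assume $\varphi X$ has finite remainder. Then $\varphi X$ is itself a finite compactification of $X$. If $\gamma X$ is any other finite compactification, its remainder is discrete and hence zero-dimensional, so by the universal property of the Freudenthal compactification among compactifications with zero-dimensional remainder, $\gamma X \leq \varphi X$; in particular $\cardinality{\gamma X \setminus X} \leq \cardinality{\varphi X \setminus X}$. Thus $\varphi X$ is the maximal finite compactification, and the two coincide.

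For the nontrivial direction ($\Rightarrow$): Suppose $\nu X$ is a maximal $N$-point compactification. Its remainder $\{p_1,\ldots,p_N\}$ is discrete, hence zero-dimensional, so there is a continuous surjection $f\colon \varphi X \to \nu X$ fixing $X$ pointwise. Set $R := \varphi X \setminus X$ and $R_i := f^{-1}(p_i) \subseteq R$ for $i=1,\ldots,N$. Each $R_i$ is closed in $\varphi X$, so the $R_i$ form a partition of the compact space $R$ into finitely many closed, and therefore clopen, pieces.

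The decisive step is to show that each $R_i$ is a singleton; this is where the maximality of $\nu X$ is used. Suppose for contradiction some $R_{i_0}$ contains two distinct points. Since $R$ is zero-dimensional and compact Hausdorff, so is $R_{i_0}$, and we may partition $R_{i_0} = R_{i_0}' \sqcup R_{i_0}''$ into two nonempty clopen pieces. This refines $\{R_1,\ldots,R_N\}$ to a clopen partition $\collection{P}$ of $R$ with $N+1$ nonempty blocks. Let $\nu' X$ be the quotient of $\varphi X$ obtained by collapsing each block of $\collection{P}$ to a point. Then $\nu' X$ is an $(N+1)$-point compactification of $X$, contradicting the maximality of $\nu X$ and completing the proof that $f$ is a bijection; a continuous bijection between compact Hausdorff spaces is a homeomorphism, so $\varphi X$ and $\nu X$ are equivalent compactifications.

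The main obstacle is verifying the above quotient $\nu' X$ really is a Hausdorff $(N+1)$-point compactification of $X$. The blocks of $\collection{P}$ are pairwise disjoint closed subsets of the compact Hausdorff (hence normal) space $\varphi X$, so they can be surrounded by pairwise disjoint open neighbourhoods in $\varphi X$. These neighbourhoods may then be saturated (by removing the other, closed, blocks) to produce disjoint saturated open sets around each block, which, together with the normality argument separating blocks from points of $X$, shows that distinct equivalence classes can be separated in the quotient. Since each block is nonempty, $X$ embeds homeomorphically into $\nu' X$ as a dense subset with remainder of size $N+1$, giving the required contradiction.
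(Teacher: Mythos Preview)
Your argument is correct and follows the same line the paper has in mind: the paper does not actually give a proof but merely records, before the statement, that finite remainders are discrete and hence zero-dimensional, cites the characterisation of the Freudenthal compactification as the \emph{maximal} compactification with zero-dimensional remainder, and marks the theorem with \qed. Your write-up simply makes explicit what the paper leaves to the reader --- in particular the nontrivial direction, where one must rule out that $\varphi X$ has a strictly larger (possibly infinite) remainder than $\nu X$ by refining a clopen partition of the zero-dimensional remainder and passing to a quotient to obtain an $(N+1)$-point compactification.
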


It follows that a locally finite, connected graph (viewed as a topological space) has a maximal finite compactification if and only if it has finitely many \emph{ends} (in the sense of Diestel,  \cite[\S8]{graphtheory}). Indeed, the ends of such a graph correspond naturally to points in the remainder of the Freudenthal compactification \cite[8.5.4]{graphtheory}.

Magill  \cite{MagillCtble}, found an attractive characterisation of maximal finite compactifications: a locally compact space has a maximal finite compactification if and only if it does not possess a compactification with countably infinite remainder.  
The next theorem, also due to Magill, gives an internal characterisation when a space has an $N$-point compactification for given $N \in \N$.

\begin{mythm}[{\cite[2.1]{Magill1}}]
\label{magill}
For $N \geq 1$, a Tychonoff space X has an $N$-point compactification if and only if $X$ is locally compact and contains $N$ non-empty open pairwise disjoint subsets $G_i$ ($1 \leq i \leq N$) such that $K = X \setminus \Union_{i=1}^N G_i$ is compact, but the union $K \cup G_i$ is non-compact for each $i$. \qed
\end{mythm}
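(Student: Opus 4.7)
The plan is to prove both implications by passing between the compactification and its internal trace on $X$.

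For the forward direction, assume $\gamma X$ is an $N$-point compactification with remainder $\{p_1, \dots, p_N\}$. Local compactness of $X$ is immediate, since $X$ is open in a compact Hausdorff space. Using Hausdorffness in $\gamma X$, I would choose pairwise disjoint open neighbourhoods $U_i$ of $p_i$ in $\gamma X$, and set $G_i := U_i \cap X$. Each $G_i$ is non-empty and open in $X$, the $G_i$ are pairwise disjoint, and
\[
K = X \setminus \bigcup_{i=1}^N G_i = \gamma X \setminus \bigcup_{i=1}^N U_i
\]
is closed in $\gamma X$, hence compact. For non-compactness of $K \cup G_i$: density of $X$ in $\gamma X$ gives $p_i \in \overline{G_i}^{\gamma X} \subseteq \overline{K \cup G_i}^{\gamma X}$, but $p_i \notin K \cup G_i$, so $K \cup G_i$ fails to be closed in $\gamma X$ and therefore cannot be compact.

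For the converse, the key step is to build an $N$-point compactification directly. Set $\gamma X = X \cup \{p_1, \dots, p_N\}$ with $p_i$ distinct new points, and declare $U \subseteq \gamma X$ open precisely when $U \cap X$ is open in $X$ and, whenever $p_i \in U$, there exists a compact $L \subseteq X$ with $G_i \setminus L \subseteq U$. A routine check shows this defines a topology extending that of $X$ and making $X$ open in $\gamma X$.

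The three substantial verifications are Hausdorffness, density, and compactness. Distinct $p_i, p_j$ are separated by $\{p_i\} \cup G_i$ and $\{p_j\} \cup G_j$ (disjoint by hypothesis). To separate $x \in X$ from $p_i$, I would use local compactness to take a compact neighbourhood $V$ of $x$ in $X$; then $\mathrm{int}(V)$ and $\{p_i\} \cup (G_i \setminus V)$ are disjoint open neighbourhoods. For density, I claim $\overline{G_i} \subseteq K \cup G_i$: disjointness of the $G_j$'s forces $G_j \cap \overline{G_i} = \emptyset$ for $j \neq i$ (since $G_j \subseteq X \setminus G_i$ would give $\overline{G_i} \subseteq X \setminus G_j$), so $\partial G_i \subseteq K$. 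Hence $K \cup G_i = K \cup \overline{G_i}$, and non-compactness of this set together with compactness of $K$ forces $\overline{G_i}$ non-compact, so no compact $L \subseteq X$ contains $G_i$, i.e.\ every basic neighbourhood $\{p_i\} \cup (G_i \setminus L)$ of $p_i$ meets $X$. Finally, for compactness of $\gamma X$, given an open cover pick one element $U_{i}$ covering each $p_i$, with $G_i \setminus L_i \subseteq U_i$ for some compact $L_i \subseteq X$; the complement $\gamma X \setminus \bigcup_i U_i$ is then contained in the compact subset $K \cup \bigcup_i L_i$ of $X$, which admits a finite subcover.

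The main obstacle is the density step, because it is the only place where the precise non-compactness assumption on $K \cup G_i$ (rather than just on $\overline{G_i}$) is used. The observation that the boundary of $G_i$ is forced into $K$ by the pairwise disjointness of the $G_j$'s is what translates the hypothesis into the usable statement that $\overline{G_i}$ is non-compact; without this identification, it would be unclear why every neighbourhood of $p_i$ must meet $X$.
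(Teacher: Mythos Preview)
Your proof is correct. The paper does not give its own argument for this theorem: it is stated with a citation to Magill's original article and a bare \qed, so there is nothing in the paper to compare your proof against. Your construction in the converse direction---adjoining points $p_i$ whose basic neighbourhoods are $\{p_i\}\cup(G_i\setminus L)$ for compact $L\subseteq X$---is the standard one, and your identification $K\cup G_i = K\cup\closure{G_i}$ (via $\closure{G_i}\cap G_j=\emptyset$ for $j\neq i$) is exactly the step needed to convert the hypothesis into non-compactness of $\closure{G_i}$ and hence density.
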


Following \cite[2.2]{Magill1}, we call a family $\singleton{K} \cup \Set{G_i}_{i=1}^N$ with the above properties an \emph{$N$-star}. %For example, $K=[-2,2]$, $G_1=(-\infty,-2)$ and $G_2=(2,\infty)$ forms a 2-star of $\R$, but the compact $L=[-2,-1] \cup [1,2]$ with open sets $H_1=(-\infty,-2)$, $H_2=(-1,1)$ and $H_3=(2,\infty)$ does not form a $3$-star of $\R$, since $L \cup H_2$ is compact.
In context of the topological reconstruction problem, we are interested in the case when cards of compact Hausdorff spaces have maximal finite compactifications. For $N \geq 1$, we say a point $x \in X$ is $N$-\emph{splitting} in $X$ if $X \setminus \singleton{x} = X_1 \oplus \ldots \oplus X_N$ such that $x \in \closure{X_i}$ for all $i \leq N$. Further, we say that $x$ is \emph{locally $N$-splitting} (in $X$) if there exists a neighbourhood $U$ of $ x$, i.e.\ a set $U$ with $x \in \interior{U}$, such that $x$ is $N$-splitting in $U$. Note that whenever we have neighbourhoods $U$ and $V$ of $x$ such that $V \subset U$ and $x$ is $N$-splitting in $U$, then $x$ is $N$-splitting in $V$ as well.

\begin{mylem}
\label{lem:easytranslation}
A card $X \setminus \singleton{x}$ of a compact Hausdorff space $X$ has an $N$-point compactification if and only if $x$ is locally $N$-splitting in $X$.
\end{mylem}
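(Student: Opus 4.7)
Both directions of the lemma will flow through Magill's internal characterization (Theorem~\ref{magill}): the card $Y := X \setminus \{x\}$, being Tychonoff and locally compact as an open subspace of the compact Hausdorff space $X$, has an $N$-point compactification if and only if it contains an $N$-star. So the plan is to establish a direct correspondence between $N$-stars in $Y$ and neighborhoods of $x$ in $X$ witnessing local $N$-splitting.

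For the ``if'' direction, I would assume $U$ is an open neighborhood of $x$ with $U \setminus \{x\} = X_1 \oplus \cdots \oplus X_N$ and $x \in \overline{X_i}$ for each $i$. Each $X_i$, being clopen in the open set $U \setminus \{x\} \subset Y$, is open in $Y$. Setting $G_i := X_i$ and $K := X \setminus U$, the set $K$ is closed in the compact space $X$ and hence compact, and by construction $K = Y \setminus \bigcup_i G_i$. Moreover $K \cup G_i = (X \setminus U) \cup X_i$ does not contain $x$ although $x \in \overline{X_i} \subset \overline{K \cup G_i}$, so $K \cup G_i$ is not closed in $X$ and therefore not compact. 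This yields the required $N$-star.

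For the ``only if'' direction, given an $N$-star $\{K\} \cup \{G_i\}_{i=1}^N$ in $Y$, I would set $U := X \setminus K$. Since $K$ is compact and $X$ is Hausdorff, $K$ is closed in $X$, so $U$ is open and contains $x$, and $U \setminus \{x\} = Y \setminus K = G_1 \oplus \cdots \oplus G_N$ (with the $G_i$ clopen in $Y \setminus K$ because their pairwise disjoint union exhausts it). The one subtle point is verifying $x \in \overline{G_i}$ for each $i$. Suppose not; then $x$ has a neighborhood in $X$ disjoint from $G_i$, and since $x \notin K$ also one disjoint from $K$, so $x \notin \overline{K \cup G_i}^X$. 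But $K \cup G_i$ is closed in $Y$ because its $Y$-complement is the open set $\bigcup_{j \neq i} G_j$, so combined with the previous observation we obtain that $K \cup G_i$ is closed, hence compact, in $X$ — contradicting the $N$-star property. The whole argument is a careful bookkeeping exercise, relying on the standard fact that compact subsets of the compact Hausdorff ambient space $X$ coincide with its closed subsets; no genuine obstacle is anticipated.
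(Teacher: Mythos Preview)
Your argument is correct and follows essentially the same route as the paper: both directions go through Magill's $N$-star criterion, translating an $N$-star $\{K\}\cup\{G_i\}$ in $X\setminus\{x\}$ into the witnessing neighbourhood $U=X\setminus K$ and vice versa, using throughout that closed and compact coincide in the compact Hausdorff space $X$. Your treatment is slightly more explicit in places (e.g.\ the contradiction argument for $x\in\overline{G_i}$), and you tacitly take $U$ open---which is harmless since locally $N$-splitting passes to sub-neighbourhoods.
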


\begin{proof}
As the result is clear for isolated $x$, we may suppose that $x$ is non-isolated.

For the direct implication, assume that $X \setminus \singleton{x}$ has an $N$-point compactification for some $N \geq 1$. By Theorem \ref{magill}, the space $X\setminus \singleton{x}$ contains an $N$-star $\singleton{K} \cup \Set{G_i}_{i=1}^N$. We claim that $U = X \setminus K$ is a neighbourhood as required. Indeed, since $K$ is compact and hence closed in the Hausdorff space $X$, the set $U$ is an open neighbourhood of $x$. Because $K\cup G_i$ is non-compact and closed in $X \setminus \singleton{x}$ (since it is a complement of open sets), we have $x \in \closureIn{G_i}{X}$. It follows that $U \setminus \singleton{x} = G_1 \oplus \cdots \oplus G_N$ is as desired.
 
Conversely, if there is a neighbourhood $U$ of $x$ such that $U \setminus \singleton{x} = U_1 \oplus \cdots \oplus U_N$ with $x \in \closureIn{U_i}{X}$ for all $i \leq N$, then $\Set{X \setminus U} \cup  \Set{U_i}_{i=1}^N$ is an $N$-star of $X \setminus \singleton{x}$. Indeed, no set $U_i \cup X\setminus U$ can be compact because it is not closed in $X$. Thus, $X \setminus \singleton{x}$ has an $N$-point compactification by Theorem \ref{magill}.
\end{proof}

\subsection{Cards with finite compactifications}
\label{section213}

We begin by introducing a new cardinal invariant for locally compact spaces. For a point $x$ in a locally compact Hausdorff space $X$ and a compact neighbourhood $D$ of $x$, we let $\localends{x,D}$ be the largest number $N \in \N=\{0,1,2,\ldots\}$ such that $D \setminus \singleton{x}$ has a finite $N$-point compactification. If there is no largest such number, we say $\localends{x,D}=\infty$. Thus, $\localends{x,D}$ takes values in $\N \cup \Set{\infty}$.  We observe that $\localends{x,D}=0$ if and only if $x$ is isolated.

\begin{mylem}
\label{endswelldef}
Let $X$ be a locally compact Hausdorff space and let $x \in X$. For any two compact neighbourhoods $D_1$ and $D_2$ of $x$, we have $\localends{x,D_1} = \localends{x,D_2}$.
\end{mylem}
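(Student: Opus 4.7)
The plan is to apply Lemma~\ref{lem:easytranslation} individually to the compact Hausdorff spaces $D_1$ and $D_2$, which translates $\localends{x,D_i}$ into a purely internal statement: $\localends{x,D_i}$ equals the largest $N \in \N$ for which $x$ is locally $N$-splitting in $D_i$ (and $\infty$ if no such maximum exists). It therefore suffices to show that for each $N \geq 1$, the point $x$ is locally $N$-splitting in $D_1$ if and only if it is locally $N$-splitting in $D_2$. The case $N=0$ is immediate, since $\localends{x,D_i}=0$ iff $x$ is isolated in $X$, a condition independent of $D_i$.

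By symmetry, assume $x$ is locally $N$-splitting in $D_1$, witnessed by a set $U \subset D_1$ with $x \in \interiorIn{U}{D_1}$ and $U \setminus \singleton{x} = U_1 \disjointSum \cdots \disjointSum U_N$ satisfying $x \in \closure{U_i}$ for every $i \leq N$. Since $D_1$ itself is a neighbourhood of $x$ in $X$, the definition of relative interior provides an open set $W$ of $X$ with $x \in W \subset U$. Setting $V = W \cap \interior{D_2}$ (interior taken in $X$), I obtain an open neighbourhood of $x$ in $X$ with $V \subset D_2$. The decomposition
\[V \setminus \singleton{x} = (V \cap U_1) \disjointSum \cdots \disjointSum (V \cap U_N)\]
then exhibits the required clopen partition in $V$: each summand is non-empty because $V$ is an open neighbourhood of $x$ and $x \in \closure{U_i}$, and each retains $x$ as a limit point because every open neighbourhood of $x$ inside $V$ still meets $U_i$. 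Since $V$ is open in $X$ and contained in $D_2$, it is also open in $D_2$, so $V$ witnesses local $N$-splitting of $x$ in $D_2$.

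The argument is essentially bookkeeping; the only mild subtlety is being careful with the ambient space when passing between interiors and closures taken in $X$, $D_1$, and $D_2$. Because $x$ lies in both $U$ and $V$, the closures $\closureIn{U_i}{U}$, $\closureIn{V \cap U_i}{V}$, and $\closure{U_i}$ all record the same information at $x$, which is exactly what makes the restriction from the witness in $D_1$ to a witness in $D_2$ preserve the $N$-splitting structure. The main obstacle, if one exists, is merely to verify that the shrinking step yields an honest neighbourhood of $x$ in $D_2$ (and not only in $D_1$), which is handled by intersecting with $\interior{D_2}$.
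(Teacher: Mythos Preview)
Your proof is correct and follows essentially the same approach as the paper: both arguments translate the problem via Lemma~\ref{lem:easytranslation} into showing that local $N$-splitting at $x$ is independent of the ambient compact neighbourhood, and then shrink the witnessing neighbourhood so that it lies inside $\interiorIn{D_1}{X} \cap \interiorIn{D_2}{X}$. The only difference is that the paper appeals to the earlier remark that $N$-splitting is inherited by smaller neighbourhoods, whereas you verify this inheritance explicitly.
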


\begin{proof}
Since the statement is symmetric, it suffices to prove only one inequality, say ``$\leq$". For this, we have to show that if $D_1 \setminus \singleton{x}$ has an $N$-point compactification, then so does  $D_2 \setminus \singleton{x}$.

So assume that $D_1 \setminus \singleton{x}$ has an $N$-point compactification. By Lemma \ref{lem:easytranslation}, there is a $D_1$-open neighbourhood $U \subset D_1$ of $x$ witnessing that $x$ is locally $N$-splitting in $D_1$. Note that we may assume that $U \subset \interiorIn{D_1}{X} \cap \interiorIn{D_2}{X}$. But then $U \subset D_2$ witnesses that $x$ is locally $N$-splitting in $D_2$. Another application of Lemma \ref{lem:easytranslation} completes the proof.
\end{proof}

Thus, the value of $\localends{x,D}$ is independent of the chosen neighbourhood $D$. In the following, we denote by $\localends{x}=\localends{x,X}$ the value of $\localends{x,D}$ for an arbitrary compact  $D \subset X$ with $x \in \interior D$.

\begin{mycor}
\label{easypiesie}
For a point $x$ in a compact Hausdorff space $X$ we have $\localends{x,X}=N$ if and only if the card $X\setminus \singleton{x}$ has a maximal $N$-point compactification. \qed
\end{mycor}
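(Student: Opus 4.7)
The plan is to deduce the corollary almost immediately from Lemma~\ref{lem:easytranslation} together with the observation that ``locally $N$-splitting'' is genuinely a local property: it depends only on arbitrarily small neighbourhoods of $x$, so it is insensitive to whether we work in $X$ or in a compact neighbourhood $D$ of $x$.

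First I would dispose of the trivial case $N=0$: by definition $\localends{x,X}=0$ precisely when $x$ is isolated, and by the stated convention a compact Hausdorff space has a maximal $0$-point compactification, so the equivalence is immediate (and $X \setminus \singleton{x}$ is compact exactly when $x$ is isolated, since compact subsets of Hausdorff spaces are closed).

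For $N \geq 1$, I would proceed by reformulating both sides using Lemma~\ref{lem:easytranslation}. By definition, $\localends{x,X} = N$ means: $N$ is the largest integer such that, for some (equivalently, by Lemma~\ref{endswelldef}, every) compact neighbourhood $D$ of $x$, the card $D \setminus \singleton{x}$ of the compact Hausdorff space $D$ has an $N$-point compactification. Applying Lemma~\ref{lem:easytranslation} inside $D$, this is the same as saying that $N$ is the largest integer for which $x$ is locally $N$-splitting in $D$. On the other hand, $X \setminus \singleton{x}$ having a maximal $N$-point compactification means, again by Lemma~\ref{lem:easytranslation}, that $N$ is the largest integer for which $x$ is locally $N$-splitting in $X$. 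Thus it suffices to verify that $x$ is locally $N$-splitting in $X$ if and only if $x$ is locally $N$-splitting in $D$.

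The crux of this last equivalence is a standard neighbourhood-shrinking argument: given any neighbourhood $U$ of $x$ in $X$ (resp.\ in $D$) witnessing $N$-splitting, replace $U$ by $V = U \cap \interior{D}$, which is simultaneously an $X$-open and $D$-open neighbourhood of $x$ contained in $U$, and then invoke the remark just before Lemma~\ref{lem:easytranslation} that $N$-splitting is inherited by smaller neighbourhoods. Since the clopen decomposition $V \setminus \singleton{x} = V_1 \oplus \cdots \oplus V_N$ obtained from the restriction is the same whether $V$ is viewed as a subspace of $X$ or of $D$, and the condition $x \in \closure{V_i}$ depends only on $V$ itself, both directions follow at once. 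This step is essentially bookkeeping rather than a genuine obstacle, so the whole proof fits into a few lines.
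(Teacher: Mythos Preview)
Your argument is correct, but you are working harder than necessary. The paper marks this corollary with an immediate \qed because it follows directly from the definition: when $X$ is compact Hausdorff, $X$ is itself a compact neighbourhood of $x$, so one may simply take $D=X$ in the definition of $\localends{x,D}$. Then $\localends{x,X}$ is by definition the largest $N$ such that $X\setminus\singleton{x}$ has an $N$-point compactification, which is exactly the statement that $X\setminus\singleton{x}$ has a maximal $N$-point compactification. No appeal to Lemma~\ref{lem:easytranslation} or to the locality of $N$-splitting is needed. Your detour through ``locally $N$-splitting in $X$ versus in $D$'' essentially reproves the content of Lemma~\ref{endswelldef}, which is already available; once that lemma is in hand, the corollary is a one-line unpacking of definitions.
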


\begin{mycor}
\label{easypiesie2}
For a point $x$ in a locally compact, non-compact Hausdorff space $X$ we have $\localends{x,X}=N$ if and only if $\alpha X\setminus \singleton{x}$ has a maximal $N$-point compactification. \qed
\end{mycor}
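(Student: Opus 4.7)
The plan is to reduce this directly to Corollary~\ref{easypiesie} applied to the compact Hausdorff space $\alpha X$. Since $X$ is locally compact Hausdorff, $\alpha X$ is compact Hausdorff and contains $X$ as an open dense subspace. As $x \in X \subset \alpha X$, Corollary~\ref{easypiesie} applied at the point $x$ in $\alpha X$ yields
\[\localends{x,\alpha X}=N \iff \alpha X \setminus \singleton{x} \text{ has a maximal $N$-point compactification}.\]
So the whole task reduces to showing $\localends{x,X}=\localends{x,\alpha X}$.

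For this, I would pick any compact neighbourhood $D$ of $x$ in $X$, which exists because $X$ is locally compact at $x$. Since $X$ is open in $\alpha X$, any $X$-open neighbourhood of $x$ contained in $D$ is also $\alpha X$-open, so $D$ is simultaneously a compact neighbourhood of $x$ in $\alpha X$. The quantity $\localends{x,D}$, by its definition, depends only on the intrinsic topology of $D \setminus \singleton{x}$ and its finite compactifications, not on the ambient space. Hence Lemma~\ref{endswelldef} (applied once inside $X$ and once inside $\alpha X$) gives
\[\localends{x,X} = \localends{x,D} = \localends{x,\alpha X},\]
and combined with the equivalence above this establishes the corollary.

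There is essentially no obstacle here: the only thing to be a little careful about is making sure the same $D$ serves as a compact neighbourhood of $x$ in both $X$ and $\alpha X$, which is immediate from $X$ being an open subspace of $\alpha X$. The corollary is really just the observation that $\localends{x,\cdot}$ is a local invariant at $x$, together with Corollary~\ref{easypiesie} applied to the Alexandroff compactification.
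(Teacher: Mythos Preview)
Your proposal is correct and is precisely the argument the paper has in mind: the corollary is stated with a bare \qed\ because it follows immediately from Corollary~\ref{easypiesie} applied to the compact Hausdorff space $\alpha X$, together with the locality of $\localends{x,\cdot}$ established in Lemma~\ref{endswelldef}. Your observation that a compact neighbourhood of $x$ in $X$ is automatically a compact neighbourhood in $\alpha X$ (since $X$ is open in $\alpha X$) is exactly the bridge needed, and there is nothing to add.
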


\begin{mycor}
A locally compact, non-compact space $X$ has a maximal $N$-point compactification if and only if $\localends{\infty,\alpha X} = N \geq 1$. \qed
\end{mycor}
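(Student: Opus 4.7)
The plan is to deduce this corollary almost immediately from Corollary~\ref{easypiesie}, since $\alpha X$ is itself a compact Hausdorff space and $X = \alpha X \setminus \singleton{\infty}$ is precisely the ``card'' of $\alpha X$ obtained by deleting the point at infinity.

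First I would apply Corollary~\ref{easypiesie} with the compact Hausdorff space $\alpha X$ in place of $X$ and the point $\infty$ in place of $x$. This yields directly that $\localends{\infty, \alpha X} = N$ if and only if $\alpha X \setminus \singleton{\infty} = X$ has a maximal $N$-point compactification. So both directions of the biconditional reduce to checking that, under the hypothesis that $X$ is locally compact and non-compact, the natural number $N$ appearing on either side automatically satisfies $N \geq 1$.

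For the forward direction, if $X$ has a maximal $N$-point compactification, then because $X$ itself is locally compact Hausdorff and non-compact, it admits at least the Alexandroff 1-point compactification $\alpha X$, so $N \geq 1$. For the converse, I would use the convention stated in Section~\ref{section213} that $\localends{x,D} = 0$ precisely when $x$ is isolated; since $X$ is non-compact, its complement $\singleton{\infty}$ is not open in $\alpha X$, i.e.\ $\infty$ is not isolated, whence $\localends{\infty, \alpha X} \geq 1$ automatically. Thus the condition ``$N \geq 1$'' on the right side of the biconditional is redundant but harmless.

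There is no genuine obstacle here: the content is already packaged in Corollary~\ref{easypiesie} together with the elementary observation about when $\infty$ is isolated in $\alpha X$. The only thing to be slightly careful about is the edge case $N = 0$, which is ruled out on both sides by the non-compactness of $X$.
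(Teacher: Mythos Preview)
Your proposal is correct and is exactly the intended argument: the paper marks this corollary with \qed\ and gives no proof, precisely because it follows immediately from Corollary~\ref{easypiesie} applied to the compact Hausdorff space $\alpha X$ and the point $\infty$, together with the observation that non-compactness of $X$ forces $\infty$ to be non-isolated (so $N \geq 1$).
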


We now investigate under which conditions a subspace of a compact space has a maximal finite compactification when deleting two points $x$ and $y$ instead of just one. As one might expect, we have to add the individual values of $\localends{x,X}$ and $\localends{y,X}$. This will be formalised in the next two results.

 For a locally compact space $X$ we write $\Remainder{n}{X}= \set{x \in X}:{\localends{x,X}=n}$, where $n \in \N \cup \singleton{\infty}$. We make the convention that $n + \infty = \infty = \infty +n$ for all $n \in \N \cup \singleton{\infty}$. Following \cite[2.4.12]{Eng}, for $A \subset X$ we denote by $X /A$ the quotient space that identifies $A$ with a single point. It is easy to verify that if $X$ is locally compact Hausdorff and $A \subset X$ is compact, then $X /A$ is locally compact Hausdorff as well.

\begin{mylem}
\label{newname}
Let $X$ be a locally compact Hausdorff space $X$. If $x \in \Remainder{n}{X}$ and $y \in \Remainder{m}{X}$ then $\singleton{x,y} \in \Remainder{n+m}{X/\Set{x,y}}$ for all $n,m \in \N \cup \singleton{\infty}$.
\end{mylem}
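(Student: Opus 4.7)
The plan is to prove both inequalities $\localends{\Set{x,y}, X/\Set{x,y}} \geq n+m$ and $\localends{\Set{x,y}, X/\Set{x,y}} \leq n+m$ by translating through Lemma \ref{lem:easytranslation} into statements about local $N$-splittings, then patching splittings at $x$ and $y$ together (for $\geq$) or tearing a splitting at the identified point apart (for $\leq$). By local compactness and Hausdorffness, fix disjoint compact neighborhoods $D_x \ni x$ and $D_y \ni y$ in $X$; let $q \colon X \to X/\Set{x,y}$ be the quotient map and write $z = q(x) = q(y)$ for the identified point. Since $\Set{x,y}$ is compact, $X/\Set{x,y}$ is locally compact Hausdorff, so $\localends{z, X/\Set{x,y}}$ is well-defined.

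For the lower bound, fix any finite $N_1 \leq n$ and $N_2 \leq m$. By Lemma \ref{lem:easytranslation} there exist open sets $U \subset D_x$ and $V \subset D_y$ with $U \setminus \Set{x} = U_1 \oplus \cdots \oplus U_{N_1}$ and $V \setminus \Set{y} = V_1 \oplus \cdots \oplus V_{N_2}$, each piece having $x$ (resp.\ $y$) in its closure. Then $q(U \cup V)$ is an open neighborhood of $z$ whose deletion of $z$ decomposes into the $N_1 + N_2$ disjoint clopen pieces $q(U_i), q(V_j)$; each has $z$ in its closure, because any open neighborhood of $z$ in the quotient pulls back to an open set containing both $x$ and $y$, hence meeting every $U_i$ and every $V_j$. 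This shows $z$ is locally $(N_1 + N_2)$-splitting, and letting $N_1, N_2$ range handles both finite and infinite cases uniformly.

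For the upper bound, suppose $z$ is locally $N$-splitting, witnessed by open $W \ni z$ with $W \setminus \Set{z} = W_1 \oplus \cdots \oplus W_N$ and $z \in \overline{W_k}$ for all $k$. Set $W'_k = q^{-1}(W_k)$. The key observation is that for each $k$, either $x \in \overline{W'_k}$ or $y \in \overline{W'_k}$: otherwise disjoint open neighborhoods of $x$ and $y$ each missing $W'_k$ would give an open set around $\Set{x,y}$ disjoint from $W'_k$, projecting to an open neighborhood of $z$ disjoint from $W_k$ and contradicting $z \in \overline{W_k}$. Let $I = \Set{k : x \in \overline{W'_k}}$ and $J = \Set{k : y \in \overline{W'_k}}$, so $I \cup J = \Set{1, \ldots, N}$. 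Choose a compact neighborhood $E_x \subset D_x$ of $x$ contained in $q^{-1}(W)$. The sets $W'_k \cap E_x$ for $k \in I$, after absorbing $\bigcup_{k \notin I}(W'_k \cap E_x)$ into any one of them, form a clopen partition of $E_x \setminus \Set{x}$ into $|I|$ pieces each with $x$ in its closure, witnessing that $x$ is locally $|I|$-splitting. Hence $|I| \leq n$; symmetrically $|J| \leq m$, giving $N \leq |I| + |J| \leq n + m$.

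The main obstacle is the separation step behind the key observation in the upper bound --- establishing that every clopen piece accumulating at $z$ in the quotient has its preimage accumulating individually at $x$ or at $y$, rather than diffusely. Once this is in hand, the remainder is bookkeeping with disjoint unions, together with the standard absorption trick of folding the non-accumulating pieces into an accumulating one to retain a local splitting witnessed at a single point.
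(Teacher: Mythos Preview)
Your proof is correct and follows essentially the same approach as the paper: establish the lower bound by patching together local splittings at $x$ and $y$ via disjoint compact neighbourhoods, and the upper bound by tearing apart a local splitting at the collapsed point into pieces that accumulate at $x$ and pieces that accumulate at $y$. The only cosmetic differences are that the paper first reduces to the compact case (working in $\alpha X$) and phrases the upper bound as a proof by contradiction via pigeonhole (assuming $N = n+m+1$ forces $|I_x| > n$ or $|I_y| > m$), whereas you argue directly that $|I| \leq n$ and $|J| \leq m$; and where you use the absorption trick inside $E_x$, the paper simply takes $\{x\} \cup \bigcup_{i \in I_x} W_i$ as the witnessing neighbourhood.
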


\begin{proof}
Suppose first that $n$ and $m$ are finite. We may assume that $X$ is compact, as otherwise we can use Corollary \ref{easypiesie2} and work in $\alpha X$. Since the quotient $X/\Set{x,y}$ is compact Hausdorff, it suffices by Corollary \ref{easypiesie} to show that $X \setminus \Set{x,y}$ has a maximal $(n+m)$-point compactification. 

Fix disjoint compact sets $C_x$ and $C_y$ such that $x \in \interior{C_x}$ and $y \in \interior{C_y}$. Since by assumption $C_x \setminus \singleton{x}$ has an $n$-point compactification,  Lemma \ref{lem:easytranslation} implies that there is an $X$-open set $U$ with $x \in U \subset \interior{C_x}$ witnessing that $x$ is locally $n$-splitting in $C_x$. Similarly, there is an open set $V$ with $y \in V \subset \interior{C_y}$ witnessing that $y$ is locally $m$-splitting in $C_y$. It follows that $(U \cup V)/\Set{x,y}$ is a neighbourhood witnessing that the collapsed point $\Set{x,y}$ is locally $(n+m)$-splitting in $ X / \Set{x,y}$. By Lemma \ref{lem:easytranslation}, the space $X \setminus \Set{x,y}$ has an $(n+m)$-point compactification. 

To see that this compactification is maximal, assume for a contradiction that $X \setminus \Set{x,y}$ has an $N$-point compactification for $N = n+m+1$. By  Lemma \ref{lem:easytranslation}, there is an open neighbourhood $W$ such that $\Set{x,y} \in W \subset X/\Set{x,y}$ witnessing that $\Set{x,y}$ is locally $N$-splitting in $X/\Set{x,y}$, i.e.\
$$ W \setminus \Set{x,y} = W_1 \oplus \ldots \oplus W_N \; \textnormal{ such that } \Set{x,y} \cap \closureIn{W_i}{X} \neq \emptyset.$$
Now let $I_x = \set{1 \leq i \leq N}:{x \in \closure{W_i}}$ and $I_y=\set{1 \leq i \leq N}:{y \in \closure{ W_i}}.$
 By the pigeon hole principle, we have either $\cardinality{I_x} > n$ or $\cardinality{I_y} > m$. Hence, the open sets
 $$U= \singleton{x} \cup \bigcup_{i \in I_x} W_i \; \textnormal{ and } \;  V= \singleton{y} \cup \bigcup_{i \in I_y} W_i$$ 
witness either that $x$ is locally $(n+1)$-splitting or that $y$ is locally $(m+1)$-splitting in $X$, contradicting that $x \in \Remainder{n}{X}$ and $y \in \Remainder{m}{X}$. 

This completes the proof for finite $n$ and $m$. In the case where either $n$ or $m$ is infinite, simply observe that the first part of the proof implies that $X \setminus \Set{x,y}$ has arbitrarily large finite compactifications.
\end{proof}

\begin{mycor}
\label{thisisobvious}
Let $X$ be a compact Hausdorff space and $x \neq y \in X$. Then $X \setminus \Set{x,y}$ has a maximal $N$-point compactification (for $N \in \N \cup \singleton{\infty}$) if and only if $\localends{x,X} + \localends{y,X} = N$. \qed
\end{mycor}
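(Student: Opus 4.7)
The plan is to reduce the two-point deletion $X \setminus \Set{x,y}$ to a one-point deletion in the quotient $X/\Set{x,y}$, so that Lemma \ref{newname} and Corollary \ref{easypiesie} apply directly. First I would note that $\Set{x,y}$ is closed in the Hausdorff space $X$ and hence compact, so $X/\Set{x,y}$ is again compact Hausdorff, and the quotient map restricts to a canonical homeomorphism
\[
\p{X/\Set{x,y}} \setminus \singleton{\Set{x,y}} \cong X \setminus \Set{x,y}.
\]
Under this identification, finite compactifications of the two spaces correspond bijectively while preserving remainder cardinality, so $X \setminus \Set{x,y}$ has a maximal $N$-point compactification if and only if the card of $X/\Set{x,y}$ obtained by removing the collapsed point has one.

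Next, writing $n = \localends{x,X}$ and $m = \localends{y,X}$, Lemma \ref{newname} yields $\localends{\Set{x,y}, X/\Set{x,y}} = n+m$, and Corollary \ref{easypiesie}, applied to the compact Hausdorff space $X/\Set{x,y}$ at the collapsed point, translates this into the assertion that $\p{X/\Set{x,y}} \setminus \singleton{\Set{x,y}}$ has a maximal $(n+m)$-point compactification. Chaining these two equivalences shows that the maximal finite compactification size of $X \setminus \Set{x,y}$ is exactly $n+m$, which is the biconditional in the case $N = n+m$. Since for the fixed pair $(x,y)$ both sides of the stated equivalence single out the same value of $N$, the general biconditional follows.

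I do not anticipate any real obstacle: the corollary is essentially a packaging of Lemma \ref{newname} and Corollary \ref{easypiesie} into a form adapted to two-point deletions. The only point worth briefly addressing is the case $N = \infty$, which must be read consistently on both sides --- $\localends{x,X} + \localends{y,X} = \infty$ means at least one summand is infinite, while ``maximal $\infty$-point compactification'' is to be interpreted as the absence of a finite upper bound on the remainder sizes of finite compactifications, in accordance with the convention already implicit in Corollary \ref{easypiesie}.
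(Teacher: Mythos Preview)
Your proposal is correct and matches the paper's intended argument: the corollary is marked with \qed and given no explicit proof, precisely because it follows immediately from Lemma~\ref{newname} together with Corollary~\ref{easypiesie} via the identification $(X/\Set{x,y}) \setminus \singleton{\Set{x,y}} \cong X \setminus \Set{x,y}$ that you spell out. Your remark on reading the $N=\infty$ case consistently is also appropriate.
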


\subsection{The reconstruction result} Equipped with the cardinal invariant $\localends{x}$, we now prove our Reconstruction Result for Compact Spaces. As $\localends{x}$ has been defined in terms of local neighbourhoods, it is reconstructible by Lemma~\ref{localproperty2}.

\begin{mylem}
\label{schwupssdiwupps}
In a locally compact Hausdorff space $X$, $\cardinality{\Remainder{n}{X}}$ is reconstructible for all $n \in \N \cup \Set{\infty}$. \qed
\end{mylem}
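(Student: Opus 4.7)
The plan is to mimic the proof of Lemma \ref{localproperty2}, exploiting that $\localends{x, X}$ is a genuinely local invariant of $x$.

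The main technical step will be a localisation principle: if $U$ is an open neighbourhood of $x$ in $X$, then $\localends{x, U} = \localends{x, X}$. Given such $U$, local compactness lets us pick a compact neighbourhood $D$ of $x$ with $D \subseteq U$; this same $D$ serves as a compact neighbourhood of $x$ in both $X$ and $U$, and by Lemma \ref{endswelldef} the value $\localends{x, \cdot}$ may be computed from any such $D$, yielding the claim. Specialising to $U = X \setminus \singleton{y}$ for $y \neq x$ (which is open in $X$ because $X$ is Hausdorff) I obtain $\localends{x, X \setminus \singleton{y}} = \localends{x, X}$, so that
$$\Remainder{n}{X \setminus \singleton{y}} = \Remainder{n}{X} \setminus \singleton{y}$$
for all $y \in X$ and $n \in \N \cup \Set{\infty}$.

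From this identity the proof concludes exactly as in Lemma \ref{localproperty2}. Finite Hausdorff spaces are discrete and hence reconstructible, so I may assume $X$ is infinite. The display above gives $\cardinality{\Remainder{n}{Y}} \leq \cardinality{\Remainder{n}{X}}$ for every card $Y$, and I would then show
$$\cardinality{\Remainder{n}{X}} = \max \set{\cardinality{\Remainder{n}{Y}}}:{Y \in \singletonDeletion{X}},$$
with equality witnessed by every card when $\Remainder{n}{X}$ is infinite, and by any card removing a point $y \notin \Remainder{n}{X}$ (such a $y$ exists because $X$ is infinite) when $\Remainder{n}{X}$ is finite. Since the right-hand side is manifestly computable from the deck, the reconstruction follows. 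As this is essentially a verbatim port of the argument for Lemma \ref{localproperty2}, no serious obstacle is anticipated; the only genuinely new input is the localisation step, which is immediate from Lemma \ref{endswelldef}.
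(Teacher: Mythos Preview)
Your proposal is correct and is essentially the paper's own argument spelled out in detail: the paper simply notes (in the sentence preceding the lemma) that $\localends{x}$ is defined in terms of local neighbourhoods and hence the count $\cardinality{\Remainder{n}{X}}$ is reconstructible by the argument of Lemma~\ref{localproperty2}, which is precisely what you unpack via Lemma~\ref{endswelldef}.
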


\begin{mythm}[Reconstruction Result for Compact Spaces]
\label{fincompactrec}
Every compact Hausdorff space that has a card with a maximal finite compactification is reconstructible.
\end{mythm}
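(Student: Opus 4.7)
The plan is to invoke Theorem \ref{thm:allReconstructionsCompact}: it suffices to show that every reconstruction of $X$ is compact. So let $Y$ be a space with $\singletonDeletion{Y} = \singletonDeletion{X}$; by Theorem \ref{mythm8} and Corollary \ref{localproperty}, $Y$ is locally compact Hausdorff. I assume towards a contradiction that $Y$ is non-compact and consider its Alexandroff compactification $\alpha Y = Y \cup \singleton{\infty}$. Write $M \coloneqq \localends{\infty, \alpha Y}$; since $\infty$ is non-isolated in $\alpha Y$, we have $M \geq 1$ (a priori possibly $\infty$).

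The crucial step I would carry out is a non-compact translation of Corollary \ref{thisisobvious}. Since every compact neighbourhood of a point $y \in Y$ in $Y$ is also one in $\alpha Y$, we have $\localends{y,Y} = \localends{y,\alpha Y}$. Applying Corollary \ref{thisisobvious} inside the compact Hausdorff space $\alpha Y$ to the pair $\Set{y,\infty}$, and noting $\alpha Y \setminus \Set{y,\infty} = Y \setminus \singleton{y}$, I obtain that $Y \setminus \singleton{y}$ has a maximal $N$-point compactification exactly when $\localends{y,Y} + M = N$. By hypothesis, some card of $X$ has a maximal $N_0$-point compactification for a finite $N_0$; by deck equality the same holds in $Y$, which forces $M \leq N_0$ and hence $M \in \N$.

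Next I would translate the deck-level property ``possesses a card with a maximal $N$-point compactification'' into internal cardinal counts. On the compact side Corollary \ref{easypiesie} equates it with $\cardinality{\Remainder{N}{X}} \geq 1$; on the non-compact side, the translation above equates it with $\cardinality{\Remainder{N-M}{Y}} \geq 1$ for $N \geq M$, and with outright falsity for $N < M$. Feeding in $\cardinality{\Remainder{n}{X}} = \cardinality{\Remainder{n}{Y}}$ from Lemma \ref{schwupssdiwupps}, deck equality then yields the descent
\[\cardinality{\Remainder{N}{X}} \geq 1 \iff \cardinality{\Remainder{N-M}{X}} \geq 1 \quad (N \geq M), \qquad \cardinality{\Remainder{N}{X}} = 0 \quad (0 \leq N < M).\]

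Since $M \geq 1$, iterating the shift $N \mapsto N - M$ drives every finite $N$ into the forbidden residue range $[0,M)$, forcing $\cardinality{\Remainder{N}{X}} = 0$ for every finite $N$---contradicting the hypothesis that $X$ carries a card with a maximal finite compactification. Hence $Y$ must be compact, and Theorem \ref{thm:allReconstructionsCompact} completes the argument. The main obstacle is the second step, the non-compact analogue of Corollary \ref{thisisobvious}; once the identification $\localends{y,Y} + M = N$ is set up correctly, the descent argument essentially writes itself.
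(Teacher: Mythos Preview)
Your argument is correct and follows essentially the same route as the paper: reduce via Theorem~\ref{compactreconstruction} to showing every reconstruction is compact, pass to $\alpha Y$ and apply Corollary~\ref{thisisobvious} to the pair $\{y,\infty\}$, then use Lemma~\ref{schwupssdiwupps} to transfer information back to $X$. The only difference is cosmetic: the paper picks the \emph{minimal} finite $N$ with $\Remainder{N}{X}\neq\emptyset$ at the outset, so a single application of $\localends{z,Z}<N$ already contradicts minimality, whereas you set up the full shift $N\mapsto N-M$ and iterate down---same idea, slightly longer bookkeeping.
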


\begin{proof}
Suppose $X$ is a compact Hausdorff space that has a card with a maximal finite compactification. Using Corollary \ref{easypiesie}, we see that $\Remainder{n}{X} \neq \emptyset$ for some $n \in \N$. Let $N$ be minimal such that $\Remainder{N}{X} \neq \emptyset$ and fix $x \in \Remainder{N}{X}$.

Suppose for a contradiction that $Z$ is a non-compact reconstruction of $X$. By Theorem \ref{localproperty}, the space $Z$ is locally compact Hausdorff. Find $z \in Z$ such that $Z \setminus \singleton{z} \cong X \setminus \singleton{x}$. Then $Z \setminus \singleton{z}$ has a maximal $N$-point compactification, so Corollary \ref{thisisobvious} implies that $\localends{z,Z} + \localends{\infty,\alpha Z} = N$. Since $Z$ is non-compact, we have $\localends{\infty,\alpha Z} \geq 1$ and hence $\localends{z,Z} < N$. Thus, $\Remainder{n}{Z} \neq \emptyset$ for some $n < N$, and hence $\Remainder{n}{X} \neq \emptyset$ for the same $n < N$ by Lemma \ref{schwupssdiwupps}, contradicting the minimality assumption on $N$. 

Thus, any reconstruction of $X$ is compact. It follows from Theorem \ref{compactreconstruction} that $X$ is reconstructible.
\end{proof}

\section{Many continua are reconstructible}
\label{section4}

We show that every continuum $X$ such that $w(X) <|X|$ has a card with a maximal $1$- or $2$-point compactification. Applying the Reconstruction Result for Compact Spaces, it follows that every such space is  reconstructible. 

Our plan goes as follows. We first strengthen Lemma~\ref{lem:easytranslation} to show that for a continuum $X$, a card $X \setminus \singleton{x}$ has an $N$-point compactification if and only if $x$ is locally $N$-separating---a condition which is (formally) substantially stronger than locally $N$-splitting. 

We then use the given restriction on weight {\sl versus} cardinality and a counting argument to show that only `few' points can be locally $N$-splitting for $N \geq 3$. So in fact `most' cards have a maximal $1$- or $2$-point compactification.

\subsection{Background from continuum theory}
Our basic reference for continuum theory is \cite{Cont}.
In a topological space $X$, the (connected) \emph{component} of a point $x$, denoted by $C(x)=C_X(x)$, is the union over all connected subspaces of $X$ containing $x$. 
%Since the closure of a connected set is connected, components are closed. Note that if a space has finitely many components, then components are in fact clopen. 
The \emph{quasi-component} of a point $x$, denoted by $Q(x)$, is the intersection over all clopen sets of $X$ containing $x$. Components and quasi-components are closed and $C(x) \subset Q(x)$ always holds. We will need the following classical results from continuum theory.

\begin{mylem}[\v{S}ura-Bura Lemma {\cite[6.1.23]{Eng}}]
\label{SuraBura}
In a compact Hausdorff space, components and quasi-components agree, i.e.\ $C(x) = Q(x)$ for all $x \in X$.\qed
\end{mylem}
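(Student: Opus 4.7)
The plan is to prove the nontrivial inclusion $Q(x) \subseteq C(x)$, which is equivalent to showing that the quasi-component $Q(x)$ is itself connected (since $C(x)$ is the maximal connected subset through $x$ and we already have $C(x) \subseteq Q(x)$).

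I will argue by contradiction. Suppose $Q(x)$ is disconnected, and write $Q(x) = A \sqcup B$ where $A, B$ are disjoint, nonempty, and relatively clopen in $Q(x)$, with $x \in A$. Observe first that $Q(x)$ is closed in $X$ since it is an intersection of (clopen, hence) closed sets; consequently $A$ and $B$ are closed in $X$. Since $X$ is compact Hausdorff and therefore normal, I can separate $A$ and $B$ by disjoint open sets $U \supseteq A$ and $V \supseteq B$ in $X$.

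The key step is a compactness/finite-intersection argument. Consider the family $\collection{K} = \set{K \subseteq X}:{K \text{ is clopen in } X \text{ and } x \in K}$; by definition $\Intersection \collection{K} = Q(x) \subseteq U \cup V$. The complement $X \setminus (U \cup V)$ is closed and hence compact, and is covered by the open family $\set{X \setminus K}:{K \in \collection{K}}$ (because any point lying in every $K \in \collection{K}$ belongs to $Q(x) \subseteq U \cup V$). By compactness, there exist $K_1, \ldots, K_n \in \collection{K}$ with $K := K_1 \intersect \cdots \intersect K_n \subseteq U \cup V$. Then $K$ is itself clopen in $X$ and contains $x$.

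Now $K = (K \intersect U) \sqcup (K \intersect V)$ is a partition of $K$ into two disjoint open subsets of $X$ (since $U \intersect V = \emptyset$). Each piece is therefore relatively clopen in $K$, and because $K$ is closed in $X$, each piece is also closed in $X$; hence $K \intersect U$ is clopen in $X$. Since $x \in K \intersect U$, by definition of the quasi-component we have $Q(x) \subseteq K \intersect U$. But this contradicts $\emptyset \neq B \subseteq Q(x)$ together with $B \subseteq V$ and $U \intersect V = \emptyset$.

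The main obstacle, and really the whole content of the argument, is the compactness step that upgrades the infinite intersection $\Intersection \collection{K} \subseteq U \cup V$ to a finite subintersection with the same containment; once that is in hand, clopenness of $K \intersect U$ and the contradiction are essentially bookkeeping. Note also that compact Hausdorff is used twice: for normality (to get $U$ and $V$) and for the compactness of $X \setminus (U \cup V)$.
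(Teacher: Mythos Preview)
Your argument is correct and is the classical proof of the \v{S}ura-Bura Lemma. Note, however, that the paper does not supply its own proof: the statement is presented with a \verb|\qed| and a citation to Engelking \cite[6.1.23]{Eng}, so there is no in-paper argument to compare against. Your proof is essentially the one found in that reference.
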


\begin{mylem}[Second \v{S}ura-Bura Lemma]
\label{adaptsurabura}
Let $X$ be a compact Hausdorff space, and $C \subset X$ a component. Then $C$ has a clopen neighbourhood base in $X$, i.e. for every open set $U \supset C$ there is a clopen set $V$ such that $C \subset V \subset U$.
\end{mylem}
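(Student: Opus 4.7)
The plan is to reduce the statement to the first Šura-Bura Lemma plus a standard compactness argument. Fix any $x \in C$. By Lemma \ref{SuraBura}, the component $C = C(x)$ coincides with the quasi-component $Q(x)$, which by definition is the intersection of the family
\[\mathcal{F} = \set{W \subset X}:{W \text{ is clopen and } x \in W}.\]
Observe that $\mathcal{F}$ is closed under finite intersections.

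Next I would invoke compactness to extract a finite sub-intersection living inside $U$. Let $K = X \setminus U$, which is closed in $X$ and hence compact. Since
\[K \intersect \Intersection \mathcal{F} = K \intersect C = \emptyset,\]
and each member of $\mathcal{F}$ is closed, compactness of $K$ yields finitely many $W_1, \ldots, W_n \in \mathcal{F}$ with $K \intersect W_1 \intersect \cdots \intersect W_n = \emptyset$. Set $V = W_1 \intersect \cdots \intersect W_n$. Then $V$ is clopen, contains $x$, and satisfies $V \subset U$.

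It remains to upgrade the containment $\singleton{x} \subset V$ to $C \subset V$. This is the only subtle point, but it is immediate from connectedness: $V \intersect C$ is a nonempty clopen subset of the connected space $C$, hence $V \intersect C = C$, giving $C \subset V \subset U$ as required. The hardest ingredient is the first Šura-Bura Lemma itself, which is already in hand, so no real obstacle remains.
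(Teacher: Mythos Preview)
Your proof is correct and follows exactly the route the paper indicates: the paper merely states that the result is ``a straightforward application of the \v{S}ura-Bura Lemma~\ref{SuraBura} and compactness'' and defers details to \cite[A.10.1]{Mill01}, and your argument supplies precisely those details. One small simplification: your final paragraph is unnecessary, since $V$ is a finite intersection of members of $\mathcal{F}$ and hence automatically contains $\Intersection \mathcal{F} = C$.
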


\begin{proof}
%By the Sura-Bura Lemma \ref{SuraBura}, the component of $x$ is the same as the quasi-component of $x$. In other words, there are clopen sets $F_j \subset X$ such that 
%$$C(x) = \bigcap_{j \in J} F_j \subset U.$$
%It follows from compactness that there is a finite $J' \subset J$ such that 
%$$C(x) \subset \bigcap_{j \in J'} F_j \subset U.$$
%Thus, $V = \bigcap_{j \in J'} F_j$ is a clopen set as required.
The proof is a straightforward application of the \v{S}ura-Bura Lemma \ref{SuraBura} and compactness. For details, see e.g.\ \cite[A.10.1]{Mill01}.
\end{proof}

\begin{mylem}[Boundary Bumping Lemma {\cite[6.1.25]{Eng}}]
\label{boundarybumping}
The closure of every component of a non-empty proper open subset $U$ of a Hausdorff continuum intersects the boundary of $U$, i.e.\ $\closure{C_U(x)} \setminus U \neq \emptyset$ for all $x \in U$. \qed
\end{mylem}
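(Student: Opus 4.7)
The plan is to argue by contradiction: assume $\closure{C_U(x)} \subset U$ and use the second \v{S}ura-Bura Lemma (Lemma~\ref{adaptsurabura}) to extract a non-trivial clopen subset of $X$, violating connectedness. Writing $C=C_U(x)$, note that components are closed in their ambient space, so $C$ is closed in $U$; the hypothesis $\closure C \subset U$ then forces $C = \closure C$, so $C$ is closed in $X$, hence compact.

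Next, since $C$ is a compact subset of the open set $U$ in the compact Hausdorff (so normal) space $X$, I would choose an open set $V$ with $C \subset V \subset \closure V \subset U$. The key observation is then that $C$ is in fact a component of $\closure V$ (not merely of $U$): any component $K$ of $\closure V$ containing $x$ is a connected subset of $\closure V \subset U$ containing $x$, so $K \subset C$ by maximality of $C$ in $U$, giving $K = C$. This step is the main conceptual hinge of the proof and the place where the choice of $V$ is used essentially.

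Applying Lemma~\ref{adaptsurabura} to the compact Hausdorff space $\closure V$ and the component $C$, and using that $V$ is open in $\closure V$ and contains $C$, I obtain a clopen subset $W$ of $\closure V$ with $C \subset W \subset V$. It remains to promote $W$ to a clopen subset of $X$. Closedness is immediate from $\closure V$ being closed in $X$. For openness, write $W = O \cap \closure V$ for some open $O \subset X$; since $W \subset V \subset \closure V$, we get $W = O \cap V$, which is open in $X$.

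Finally, $W$ is non-empty because $x \in C \subset W$, and $W$ is proper in $X$ because $W \subset U \subsetneq X$. This contradicts connectedness of the continuum $X$, completing the proof. The only non-routine step is verifying that $C$ remains a component after passing from $U$ to the smaller compact set $\closure V$, which is exactly what lets us invoke Lemma~\ref{adaptsurabura}.
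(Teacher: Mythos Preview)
Your proof is correct. The paper does not supply its own argument for this lemma---it is stated without proof, with only a reference to Engelking \cite[6.1.25]{Eng}---so there is nothing to compare against beyond noting that your route via the Second \v{S}ura-Bura Lemma (Lemma~\ref{adaptsurabura}) is the standard one. One minor remark: when you conclude $K=C$ from $K \subset C$, the reverse inclusion $C \subset K$ is also needed; it is immediate, since $C \subset \closure{V}$ is connected and contains $x$, hence lies in the component $K$ of $x$ in $\closure{V}$.
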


\subsection{Characterising maximal finite compactifications} 
Let $X$ be a topological space. A point $x \in X$ is called \emph{separating} in $X$ if $X \setminus \singleton{x}$ has a disconnection $ A_1 \oplus A_2$ such that both $A_1$ and $A_2$ intersect $C_X(x)$. A point $x$ is called \emph{locally separating} in $X$ if there is a neighbourhood $U$ of $x$ such that $x$ is separating  in $U$. These definitions are due to Whyburn \cite[III.8-9]{why}. 

We extend these definitions by saying that $x$ is \emph{$N$-separating} in $X$ if $X \setminus \singleton{x}$ has a disconnection into $N$ (clopen) sets $ A_1 \oplus \cdots \oplus A_N$ such that all $A_i$ intersect $C_X(x)$. Similarly, we say $x$ is \emph{locally $N$-separating} in $X$ if there is a neighbourhood $U$ of $x$ such that $x$ is $N$-separating in $U$. 

A point $x$ of a connected space $X$ is called \emph{cut point} if $X \setminus \singleton{x}$ is disconnected, and it is called \emph{$N$-cut point} if $X \setminus \singleton{x}$ splits into at least $N$ non-empty disjoint clopen sets. Note that in a connected space $X$, the notion of $N$-cut point and $N$-separating point coincide. A point of some topological space is called \emph{sub cut point} if it is a cut point of some connected subspace \cite{DowHart}. More generally, we call a point a \emph{sub N-cut point} if it is an $N$-cut point of some connected subspace.

\begin{figure}[h]
\begin{tikzpicture}[scale=1.5]
\draw [thick] (0,0) -- (0,2);
\draw [thick] (0,0) -- (0,-1.5);
\draw [thick] (0,0) -- (2,0);
\draw [thick] (0,0) -- (-1.5,0);

\foreach \x in {0,1,2,3,...,5}{
\pgfmathsetmacro\result{.7*(0.8^\x)}
 \draw[thick] (\result,\result) -- (\result,2);
  \draw[thick] (\result-.014,\result) -- (2,\result);
}

\node[rectangle, black, fill=black, draw,inner sep=0, minimum height=.2pt,minimum width=.2pt] (Lim1) at (.05,1) {}; 
\node[rectangle, black, fill=black, draw, inner sep=0,minimum height=.2pt,minimum width=.2pt] (Lim2) at (.15,1) {}; 
\node[rectangle, black, fill=black, draw, inner sep=0, minimum height=.2pt,minimum width=.2pt] (Lim3) at (.1,1) {}; 

\node[rectangle, black, fill=black, draw, inner sep=0,minimum height=.2pt,minimum width=.2pt] (Lim4) at (1,.05) {}; 
\node[rectangle, black, fill=black, draw, inner sep=0,minimum height=.2pt,minimum width=.2pt] (Lim5) at (1,.15) {}; 
\node[rectangle, black, fill=black, draw, inner sep=0, minimum height=.2pt,minimum width=.2pt] (Lim6) at (1,.1) {};

\draw[thick] (-1.5,0) arc (180:270:1.5);

\node[circle, black, fill=white, draw, inner sep=1pt,minimum width=1pt] (x) at (0,0) {}; 
\node (x2) at (.2,-.2) {$x$}; 
\end{tikzpicture}
\captionsetup{singlelinecheck=off}
\caption[blablabla]{A (disconnected) subset $X$ of the plane and a designated point $x$ which is
\begin{itemize}
\itemsep0em 
\item $2$-separating but not $3$-separating,
\item locally $3$-separating but not locally $4$-separating, and
\item a sub $4$-cut point, but not a sub $5$-cut point.
\end{itemize}
}
\end{figure}

Next, we show that ``locally $N$-separating'' is a stronger condition than ``locally $N$-splitting''. We then show as a consequence of the Boundary Bumping Lemma that in Hausdorff continua, both notions coincide.

\begin{mylem}
Let $x$ be a point in a $T_1$ space. If $U$ is an open set witnessing that $x$ is locally $N$-separating, i.e. $U\setminus \singleton{x}=U_1 \oplus \cdots \oplus U_N$ and $U_i \cap C_U(x) \neq \emptyset$ for all $i$, then $x$ is contained in the boundary of $U_i \cap C_U(x)$ for all $i$.
\end{mylem}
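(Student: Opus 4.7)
The plan is to reduce the claim to showing $x \in \closure{U_i \cap C_U(x)}$ for every $i \leq N$. Since $x \notin U_i$, we have $x \notin U_i \cap C_U(x)$, so $x$ is trivially in the closure of the complement of $U_i \cap C_U(x)$. Thus the whole statement collapses to verifying $x$ is a limit point of $U_i \cap C_U(x)$.

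Write $C = C_U(x)$. The key observation is that each $C \cap U_i$ is open in $C$: because $X$ is $T_1$, the singleton $\Set{x}$ is closed, so $U \setminus \singleton{x}$ is open in $U$, and hence each $U_i$ (clopen in $U \setminus \singleton{x}$) is open in $U$. Restricting to $C$ gives the claim. The strategy is then to suppose, for a contradiction, that $x \notin \closure{C \cap U_i}$ for some $i$, and derive that $C \cap U_i$ is also \emph{closed} in $C$, which would split $C$ into the nonempty clopen piece $C \cap U_i$ and its complement in $C$ (nonempty because it contains $x$), contradicting connectedness of the component $C$.

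To execute the contradiction: since $U_i$ is clopen in $U \setminus \singleton{x}$, we have $\closure{C \cap U_i}^{U \setminus \singleton{x}} \subseteq U_i$. Under the hypothesis $x \notin \closure{C \cap U_i}^U$, this closure-in-$U \setminus \{x\}$ actually equals $\closure{C \cap U_i}^U$, giving $\closure{C \cap U_i}^U \subseteq U_i$. Intersecting with $C$ yields $\closure{C \cap U_i}^C = C \cap U_i$, so $C \cap U_i$ is closed in $C$, providing the required clopen disconnection.

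No deep tool is really needed here; the only nuance is being careful that ``boundary'' reduces to ``closure" since $x$ sits outside $U_i \cap C_U(x)$, and then exploiting the interplay between the clopenness of $U_i$ in the punctured neighbourhood $U \setminus \singleton{x}$ and the assumed failure of $x$ to be a limit point. The main obstacle, if any, is purely bookkeeping with the different ambient spaces ($C$, $U$, $U \setminus \singleton{x}$) when computing closures, and the $T_1$ hypothesis is used only to ensure that $U \setminus \singleton{x}$ is open so that these computations go through.
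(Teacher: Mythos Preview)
Your proof is correct and follows essentially the same approach as the paper: reduce to showing $x \in \closure{U_i \cap C_U(x)}$, then argue by contradiction that otherwise $C_i = U_i \cap C_U(x)$ would be a nonempty proper clopen subset of the connected set $C_U(x)$. The only cosmetic difference is that the paper obtains closedness of $C_i$ in $C_U(x)$ via the observation that $U_i \cup \{x\}$ is $U$-closed, whereas you route through $\closure{C_i}^{U\setminus\{x\}} \subseteq U_i$; these are equivalent bookkeeping choices.
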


\begin{proof}
Since $X$ is $T_1$, every $U_i$ is $X$-open and every $U_i \cup \singleton{x}$ is $U$-closed. If $x$ does not lie in the closure of $C_i=U_i \cap C_U(x)$ for some $i$, this $C_i$ would be a non-trivial clopen subset of the connected set $C_U(x)$, a contradiction. Indeed, $U_i$ witnesses that $C_i \subset C_U(x)$ is open, and if $x \notin \closure{C}_i$, then $C_i$ is closed in $ \p{U_i \cup \singleton{x}} \cap C_U(x)$, which in turn in closed in $C_U(x)$.
\end{proof}

\begin{mycor}
\label{cor:Nsep}
In $T_1$ spaces, locally $N$-separating points are locally $N$-splitting. \qed
\end{mycor}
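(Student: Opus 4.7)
The plan is to read Corollary~\ref{cor:Nsep} as an immediate consequence of the preceding lemma, so essentially no new work is required.

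First, I would unfold the two definitions side by side. The witness for locally $N$-separating is a neighbourhood $U$ of $x$ giving a disconnection $U \setminus \{x\} = U_1 \oplus \cdots \oplus U_N$ with the additional property $U_i \cap C_U(x) \neq \emptyset$ for each $i$. The witness for locally $N$-splitting asks for the same shape of decomposition, but with the weaker condition $x \in \closure{U_i}$ for each $i$. So given a locally $N$-separating witness $U$, the only thing to verify is that each $U_i$ accumulates at $x$.

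The key step is exactly the content of the immediately preceding unnumbered lemma: the same $U$ witnesses that $x$ lies in the boundary of $U_i \cap C_U(x)$ for every $i$. In particular, $x \in \closure{U_i \cap C_U(x)} \subset \closure{U_i}$, so $x \in \closure{U_i}$ for each $i$. Therefore $U$ also witnesses locally $N$-splitting, which is what we wanted.

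Since the argument is entirely a rewriting of the previous lemma, there is no real obstacle; the only thing to be careful about is that the neighbourhood $U$ does not have to be modified (the same $U$ works for both notions), so in fact one gets the stronger statement that \emph{every} locally $N$-separating witness is automatically a locally $N$-splitting witness. This is why the authors mark the corollary with \texttt{\textbackslash qed} directly after its statement.
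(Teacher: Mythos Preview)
Your argument is correct and is exactly the intended one: the preceding lemma gives $x \in \closure{U_i \cap C_U(x)} \subset \closure{U_i}$ for every $i$, so the same witnessing neighbourhood $U$ shows that $x$ is locally $N$-splitting. This is precisely why the paper marks the corollary with an immediate \qed.
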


\begin{mycor}
\label{cor:Nsep2}
Suppose $V \subset U$ are neighbourhoods of a point $x$ in a $T_1$ space. If $x$ is $N$-separating in $U$ then $x$ is $N$-separating in $V$. \qed
\end{mycor}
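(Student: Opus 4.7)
\emph{Plan.} My plan is to show that if $A_1 \oplus \cdots \oplus A_N$ is the decomposition of $U \setminus \singleton{x}$ witnessing that $x$ is $N$-separating in $U$, then its restriction $B_i := A_i \cap V$ witnesses $N$-separation of $x$ in $V$. The clopen-partition structure is routine: each $B_i$ is clopen in $V \setminus \singleton{x}$ as the intersection of a clopen subset of $U \setminus \singleton{x}$ with the subspace $V \setminus \singleton{x}$, and the $B_i$'s are pairwise disjoint with union $V \setminus \singleton{x}$ since $V \subset U$. Non-emptiness of each $B_i$ follows from the preceding lemma applied to $U$ (or, if one wants $U$ open, to $\interior{U}$, on which the same decomposition still witnesses $N$-separation): it yields $x \in \closure{A_i \cap C_U(x)}$, and since $\interior{V}$ is an open neighbourhood of $x$, the intersection $\interior{V} \cap A_i \cap C_U(x)$ is non-empty, so in particular $B_i \neq \emptyset$.

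The crucial step is to verify $B_i \cap C_V(x) \neq \emptyset$ for each $i$. My approach is by contradiction: supposing $B_1 \cap C_V(x) = \emptyset$, we would have $C_V(x) \setminus \singleton{x} \subseteq B_2 \cup \cdots \cup B_N$. Since $C_V(x)$ is a connected subset of $U$ containing $x$, it is contained in $C_U(x)$, while the preceding lemma asserts that $A_1 \cap C_U(x)$ accumulates at $x$. I would combine these two facts---that $C_V(x)$ entirely avoids $A_1$, yet $A_1 \cap C_U(x)$ has $x$ as a boundary point inside $\interior{V}$---to derive a contradiction, exploiting the neighbourhood property of $V$ to transfer enough connectivity from the $U$-component of $x$ to its $V$-component.

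The main obstacle I anticipate is precisely this last contradiction step: in a general $T_1$ space, the component $C_V(x)$ can a priori be strictly smaller than $C_U(x) \cap V$, so the argument must carefully exploit $x \in \interior{V}$ together with the boundary characterisation from the preceding lemma to ensure that $C_V(x)$ inherits enough of the structure of $C_U(x)$ to meet each of the restricted pieces $B_i$. Once this is in place, the restricted partition $\{B_i\}_{i=1}^{N}$ directly witnesses that $x$ is $N$-separating in $V$.
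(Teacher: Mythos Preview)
You have correctly set up the restriction $B_i = A_i\cap V$ and correctly isolated the only non-routine step: showing that each $B_i$ meets $C_V(x)$. Your worry that $C_V(x)$ may be strictly smaller than $C_U(x)\cap V$ is entirely justified --- so much so that the contradiction argument you sketch cannot be completed, because the statement is actually false for general $T_1$ spaces. Take two copies $Y_1,Y_2$ of the Knaster--Kuratowski fan (a connected metrizable space $Y$ with apex $a$ such that $Y\setminus\{a\}$ is totally disconnected) and glue them at a common non-apex point $p$ to form the Hausdorff space $X$. With $U=X$ the point $p$ is $2$-separating: $X$ is connected and $X\setminus\{p\}=(Y_1\setminus\{p\})\oplus(Y_2\setminus\{p\})$. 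But for the open neighbourhood $V=X\setminus\{a_1,a_2\}\subset U$, any connected $D\subset V$ containing $p$ must lie in one of the closed pieces $Y_i$ (since $Y_i$ is closed and $X\setminus Y_i$ is open in $X$), hence in the totally disconnected set $Y_i\setminus\{a_i\}$; thus $C_V(p)=\{p\}$ and $p$ fails to be even $1$-separating in $V$.

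So the obstacle you flagged is not a defect of your approach but of the corollary at the stated generality: the preceding lemma yields only $x\in\closure{A_i\cap C_U(x)}$, information living entirely in $U$, and this says nothing about $C_V(x)$. What \emph{does} follow immediately from the lemma is the analogous statement with ``$N$-separating'' replaced by ``$N$-splitting'' (use $x\in\closure{A_i}$ and intersect with $\interior{V}$). In the paper's applications to Hausdorff continua, the Boundary Bumping Lemma then upgrades $N$-splitting in a compact neighbourhood back to $N$-separating there, exactly as in the proof of Lemma~\ref{fintranslationhelp}, so the downstream results survive; but some extra hypothesis beyond $T_1$ is genuinely needed.
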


\begin{mylem}
\label{fintranslationhelp}
A point of a Hausdorff continuum is locally $N$-separating if and only if it is locally $N$-splitting.
\end{mylem}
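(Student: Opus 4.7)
The forward direction, locally $N$-separating implies locally $N$-splitting, is immediate from Corollary~\ref{cor:Nsep}, so I focus on the converse. Suppose $x$ is locally $N$-splitting, witnessed by an open neighbourhood $U$ of $x$ with $U \setminus \singleton{x} = U_1 \oplus \cdots \oplus U_N$ and $x \in \closure{U_i}$ for each $i$. Using that $X$ is compact Hausdorff and hence regular, I shrink to an open neighbourhood $V$ of $x$ with $\closure{V} \subset U$ (and $V \neq X$; otherwise $x$ is already $N$-separating in $X$ since $C_X(x) = X$), and set $V_i := V \cap U_i$. My plan is to show that $V$ witnesses local $N$-separation, that is, the component $C_V(x)$ meets $V_i$ for every $i$.

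A first observation, using that each $U_i$ is clopen in $U \setminus \singleton{x}$ together with $\closure{V} \subset U$, is the identity $\closure{V_i} \cap V = V_i \cup \singleton{x}$, so $\closure{V_i} \setminus V_i \subset (\closure{V} \setminus V) \cup \singleton{x}$. Hence, by the Boundary Bumping Lemma applied to each component $K$ of the open set $V_i \subset X$, the closure $\closure{K}$ must either contain $x$ or meet $\closure{V} \setminus V$. Fix $i$; this yields two cases.

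In the easy case some component $K$ of $V_i$ satisfies $x \in \closure{K}$; then $K \cup \singleton{x}$ is connected, lies in $V$, and hence in $C_V(x)$, giving $C_V(x) \cap V_i \neq \emptyset$. The harder case is when every component of $V_i$ has closure meeting $\closure{V} \setminus V$ but missing $x$. I plan to work in the compact subspace $Y := \closure{V_i}$ and first show that the component $C_Y(x)$ is non-degenerate. Assume for contradiction that $C_Y(x) = \singleton{x}$; by the \v{S}ura-Bura Lemmas~\ref{SuraBura} and~\ref{adaptsurabura}, there is a clopen-in-$Y$ neighbourhood $A$ of $x$ contained in some open $B \ni x$ with $\closure{B} \subset V$. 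Then $A \cap V_i$ is a non-empty clopen subset of $V_i$, hence a union of components of $V_i$; but each such component would have closure meeting $\closure{V} \setminus V$ (case hypothesis) while being contained in $\closure{A} \subset \closure{B} \subset V$, a contradiction.

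Having established that $C_Y(x) \subset \closure{V_i}$ is a non-degenerate continuum through $x$, the component of $C_Y(x) \cap V$ containing $x$ is non-degenerate: if $C_Y(x) \subset V$ this is $C_Y(x)$ itself, and otherwise Boundary Bumping inside the continuum $C_Y(x)$ produces a non-trivial component of the proper open subset $C_Y(x) \cap V$ at $x$. Either way, this component lies in $\closure{V_i} \cap V = V_i \cup \singleton{x}$, so it meets $V_i$, and as a connected subset of $V$ through $x$ it is contained in $C_V(x)$. The main obstacle is the harder case: individual components of $V_i$ need not approach $x$ even when $x \in \closure{V_i}$, so extracting a continuum that connects $x$ into $V_i$ requires the quasi-component/\v{S}ura-Bura machinery applied to $\closure{V_i}$.
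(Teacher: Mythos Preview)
Your proof is correct and uses the same two tools as the paper---the Second \v{S}ura-Bura Lemma and the Boundary Bumping Lemma---to show that the component of $x$ in the compact set $\closure{V_i}$ is non-degenerate; this is exactly the paper's strategy with $A_i = U_i \cup \singleton{x}$ in place of your $Y = \closure{V_i}$. The paper's argument is shorter because it chooses the witnessing neighbourhood $U$ to be \emph{compact} from the outset: then $A_i \subset U$ already, so once $C_{A_i}(x)$ is shown non-degenerate one is done, with no need for your easy/hard case split (which is in fact redundant---your \v{S}ura-Bura contradiction works uniformly) or for the second application of Boundary Bumping to pull $C_Y(x)$ back inside $V$.
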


\begin{proof}
The direct implication follows from Corollary \ref{cor:Nsep}. For the converse, let $X$ be a Hausdorff continuum and suppose $x$ is locally $N$-splitting in $X$. Find a witnessing compact neighbourhood $U$ of $x$, i.e.\ $U \setminus \singleton{x} = U_1 \oplus \cdots \oplus U_N$ and $x \in \closure{U}_i$ for all $i$.

For every $i$, consider the compact space $A_i = U_i \cup \singleton{x}$. To prove that $x$ is locally $N$-separating in $X$, it suffices to show that $C_{A_i}(x)$, the component of $x$ in $A_i$, is non-trivial. Suppose it is trivial, then by the Second \v{S}ura-Bura Lemma \ref{adaptsurabura}, there is an $A_i$-clopen neighbourhood $V$ of $x$  such that $x \in V \subset \interior{U} \cap A_i$. Observe that $V \setminus \singleton{x}$ is a non-empty open subset of $X$ containing only the point $x$ in its boundary. Hence, by the Boundary Bumping Lemma \ref{boundarybumping}, every component of $V \setminus \singleton{x}$ limits onto the boundary point $x$, so that $C_{A_i}(x)$ must be non-trivial.
\end{proof}

\begin{mylem}
\label{fintranslation}
A card $X \setminus \singleton{x}$ of a Hausdorff continuum $X$ has an $N$-point compactification if and only if $x$ is locally $N$-separating in $X$.
\end{mylem}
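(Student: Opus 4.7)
The statement is an immediate combination of two results already established in the excerpt, so my plan is simply to chain them together and comment on why each applies. First I would invoke Lemma~\ref{lem:easytranslation} (which was proved for arbitrary compact Hausdorff spaces, with $X$ continuum a special case): the card $X \setminus \singleton{x}$ has an $N$-point compactification if and only if $x$ is locally $N$-splitting in $X$.

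Then, since $X$ is a Hausdorff continuum, I would apply Lemma~\ref{fintranslationhelp} to equate ``locally $N$-splitting'' with ``locally $N$-separating.'' Combining the two equivalences yields the desired characterisation. No additional input is required beyond noting that local $N$-splitting is defined via a compact neighbourhood, and in a continuum we may always shrink to a compact neighbourhood so that Lemma~\ref{fintranslationhelp} applies.

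The only mildly delicate point is making sure the ``only if'' direction from Lemma~\ref{lem:easytranslation} really delivers local $N$-splitting in the sense required by Lemma~\ref{fintranslationhelp} — but both lemmas use the same definition (existence of a neighbourhood $U$ of $x$ with $U\setminus\singleton{x}$ decomposing into $N$ clopen pieces each having $x$ in its closure), so there is nothing to reconcile. Hence the proof reduces to a two-line citation, and there is no genuine obstacle: the hard work has been done in the two preceding lemmas.
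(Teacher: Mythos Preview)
Your proposal is correct and matches the paper's own proof exactly: the paper simply writes ``By Lemmas~\ref{lem:easytranslation} and~\ref{fintranslationhelp}.'' Your additional remarks about compatibility of definitions are fine but unnecessary, since both lemmas use the identical notion of locally $N$-splitting.
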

\begin{proof}
By Lemmas \ref{lem:easytranslation} and \ref{fintranslationhelp}.
\end{proof}

\subsection{Reconstruction of  continua}

The \emph{density} of a space, denoted $d(X)$, is the minimum size of a dense subset of $X$. Note that the density of any space is no more than its weight.

\begin{mylem}[For metrizable continua: Whyburn {\cite{Whyburn}}]
\label{lemma01}
The number of $3$-cut points in a connected $T_1$-space $X$ is not larger than the density of $X$. 
\end{mylem}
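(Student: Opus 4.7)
Fix a dense subset $D \subseteq X$ with $|D| = d(X)$; as $X$ is a connected $T_1$ space with $|X| \geq 3$, we have $d(X) \geq \aleph_0$ (a finite $T_1$ space is discrete, hence connected only when trivial). For each $3$-cut point $x$, choose a clopen partition $X \setminus \{x\} = A^x_1 \oplus A^x_2 \oplus A^x_3$ into exactly three non-empty pieces, combining surplus pieces into $A^x_3$ if necessary. Since $X \setminus \{x\}$ is open in the $T_1$-space $X$, each $A^x_i$ is itself open in $X$ and therefore meets $D$; pick $d^x_i \in D \cap A^x_i$. This defines a map
\[
\Phi \colon S \to [D]^3, \quad x \mapsto \{d^x_1, d^x_2, d^x_3\},
\]
from the set $S$ of $3$-cut points to the 3-element subsets of $D$. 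Since $|[D]^3| \leq |D|^3 = |D| = d(X)$, it suffices to show that $\Phi$ is injective, from which $|S| \leq d(X)$ follows.

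To prove injectivity, suppose $x \ne y$ in $S$ with $\Phi(x) = \Phi(y) = \{d_1, d_2, d_3\}$; after relabelling, $d_i \in A^x_i \cap A^y_i$. Then $y$ lies in some piece $A^x_\sigma$ of $X \setminus \{x\}$, say $y \in A^x_1$ after a further relabelling, and symmetrically $x \in A^y_\tau$ for some $\tau \in \{1,2,3\}$. The plan is to construct a proper, non-empty clopen subset $W$ of $X$ to contradict the connectedness of $X$. In the main case $\tau = 1$, set $W := A^x_1 \cup \{x\}$. Granted the key set-equality $W = A^y_1 \cup \{y\}$, the set $W = A^x_1 \cup A^y_1$ is open in $X$ (as a union of two open sets), while $X \setminus W = A^x_2 \cup A^x_3$ is also open, so $W$ is clopen. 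Since $W$ contains $x, y, d_1$ but excludes $d_2, d_3$, it is proper and non-empty, contradicting connectedness. The remaining case $\tau \neq 1$ is handled analogously by taking $W := A^x_1 \cup A^y_\tau \cup \{x\}$ and verifying that $X \setminus W = A^x_3$ (up to relabelling), which again yields a proper, non-empty clopen set.

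The main obstacle is establishing the set-equality $A^x_1 \cup \{x\} = A^y_1 \cup \{y\}$ (and its analog in the other case), equivalently $A^x_i = A^y_i$ for the two indices $i$ outside the ``linking'' pieces. For this, one passes to connected components: each piece $A^x_i$ contains the connected component $C^x_i$ of $d_i$ in $X \setminus \{x\}$, and any connected subset of $X \setminus \{y\}$ must lie inside a single piece of its clopen partition. This forces inclusions such as $C^x_i \subseteq A^y_i$ and, by symmetry, $C^y_i \subseteq A^x_i$. To upgrade these inclusions on components to the equality $A^x_i = A^y_i$ on the whole clopen pieces, one chooses the partitions coherently---for instance, taking each $A^x_i$ as a ``canonical'' clopen hull of $C^x_i$ in $X \setminus \{x\}$ (e.g.\ the union of all clopen subsets of $X \setminus \{x\}$ which contain $d_i$ but avoid the other two chosen points). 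The delicate point, in the absence of local connectedness or compactness, is to ensure that this canonical choice restricts compatibly to $X \setminus \{x,y\}$ on both sides; any mismatch is shown to produce a proper clopen subset of $X$, contradicting connectedness and closing the argument.
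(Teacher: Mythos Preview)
Your overall strategy---assign to each $3$-cut point $x$ a triple $\{d_1^x,d_2^x,d_3^x\}\in[D]^3$ and show the assignment is injective---is correct and is essentially the classical approach (the paper itself simply cites \cite[7.3]{recpaper} here). However, your argument for injectivity is not actually carried out. You identify the ``key set-equality'' $A_1^x\cup\{x\}=A_1^y\cup\{y\}$ as the crux, then appeal to connected components and unspecified ``canonical clopen hulls'', finally conceding that the compatibility of these hulls is ``delicate''. This is where the proof stops being a proof.

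The detour through components and canonical choices is unnecessary, and the case split on $\tau$ is a red herring. The clean argument works for \emph{any} choice of partitions. With your notation ($y\in A_1^x$, $x\in A_\tau^y$), show directly:
\[
\text{for every } i\neq 1 \text{ and every } j\neq\tau,\quad A_i^x\cap A_j^y=\emptyset.
\]
Indeed, $B:=A_i^x\cap A_j^y$ is open in $X$; its closure in $X$ is contained in $(A_i^x\cup\{x\})\cap(A_j^y\cup\{y\})$, and since $x\notin A_j^y\cup\{y\}$ (as $x\in A_\tau^y$, $j\neq\tau$) and $y\notin A_i^x\cup\{x\}$ (as $y\in A_1^x$, $i\neq 1$), this closure is just $B$. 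So $B$ is clopen in the connected space $X$, and is proper since $x\notin B$; hence $B=\emptyset$. It follows that $A_2^x\cup A_3^x\subseteq A_\tau^y$, whence $d_2,d_3\in A_\tau^y$; but $d_2\in A_2^y$ and $d_3\in A_3^y$ force $\tau=2$ and $\tau=3$ simultaneously, a contradiction. Thus $\Phi$ is injective, and your counting finishes the proof. Note in particular that the ``main case $\tau=1$'' you analyse at length simply cannot occur, so the set-equality you were trying to establish is moot.
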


\begin{proof}
See \cite[7.3]{recpaper}.
\end{proof}

\begin{mylem}[For separable metric spaces: Whyburn {\cite[III(8.1)]{why}}]
\label{lemma02}
In a $T_1$ space $X$, the number of components containing separating points of $X$ does not exceed the weight of $X$. \end{mylem}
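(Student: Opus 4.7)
The plan is to associate to each component $C$ containing a separating point a pair of basic open sets, in such a way that the association is injective. Since the pair ranges over $\mathcal{B} \times \mathcal{B}$, where $\mathcal{B}$ is a base of size $w(X)$ (assumed infinite; the finite case reduces to $X$ being discrete, whence $\mathcal{N} = \emptyset$), this yields the bound.

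More precisely, let $\mathcal{N}$ denote the collection of components of $X$ that contain a separating point. For each $C \in \mathcal{N}$, I would fix a separating point $x_C \in C$ together with a witnessing disconnection $X \setminus \singleton{x_C} = A^C_1 \oplus A^C_2$ where each $A^C_i$ meets $C$. Since $X$ is $T_1$, $X \setminus \singleton{x_C}$ is open in $X$, so $A^C_1$ and $A^C_2$ are disjoint non-empty open subsets of $X$. Now pick points $p^C_i \in A^C_i \cap C$ and basic open sets $B^C_i \in \mathcal{B}$ with $p^C_i \in B^C_i \subset A^C_i$; in particular $B^C_i \cap C \neq \emptyset$ and $B^C_i \cap A^C_{3-i} = \emptyset$.

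The core of the argument is to verify that the assignment $C \mapsto (B^C_1, B^C_2)$ is injective. Suppose $C \neq C'$ both lie in $\mathcal{N}$ and $(B^C_1, B^C_2) = (B^{C'}_1, B^{C'}_2) = (B_1, B_2)$. The key observation is that $x_{C'} \in C'$ and distinct components are disjoint, so $x_{C'} \notin C$, giving $C \subset A^{C'}_1 \cup A^{C'}_2$. Since $C$ is connected and $A^{C'}_1, A^{C'}_2$ are disjoint and open, $C$ is wholly contained in one of them, say $C \subset A^{C'}_j$. Then $B_{3-j} \cap C \subset B_{3-j} \cap A^{C'}_j = \emptyset$, contradicting $B_{3-j} = B^C_{3-j}$ meets $C$. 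Injectivity gives $|\mathcal{N}| \leq |\mathcal{B}|^2 = w(X)$.

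The main obstacle is ensuring that the injectivity argument works without extra separation axioms; it relies only on the $T_1$-property (to make $A^C_i$ open in $X$) and the connectedness of each component $C$ to force $C$ into a single piece of any disconnection $A^{C'}_1 \oplus A^{C'}_2$ whose removed point lies outside $C$. No additional continuum-theoretic tools (\v{S}ura--Bura, boundary bumping) appear to be needed for this lemma.
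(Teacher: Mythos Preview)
Your proof is correct and shares the paper's key observation: if $C \neq C'$ are distinct components, then $C$ misses $x_{C'}$ and is therefore entirely contained in one side of the disconnection $A^{C'}_1 \oplus A^{C'}_2$. Where you diverge is in how this is leveraged. The paper argues topologically: it embeds the witness-pairs $(a_\alpha, b_\alpha)$ into $X^2$, notes that a subset of $X^2$ of size exceeding $w(X^2)=w(X)$ cannot be discrete, picks an accumulation point $(a_\gamma,b_\gamma)$, and derives a contradiction from the fact that every other $C_\alpha$ lies on one side of the $\gamma$-disconnection. You instead code each component directly by an ordered pair of basic open sets and verify injectivity by hand. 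Your route is a bit more elementary (no detour through $X^2$ or accumulation points) and arguably more transparent; the paper's version has the slight advantage of avoiding the separate bookkeeping for the finite-weight case, since the weight inequality for discrete subspaces handles cardinal arithmetic implicitly. Both arguments are essentially dual views of the same counting principle.
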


\begin{proof}
Suppose for a contradiction that in a $T_1$ space $X$ of weight $\kappa$ there is a collection of distinct components $\set{C_\alpha}:{\alpha < \kappa^+}$, each containing a separating point $x_\alpha$ of $X$. For all $\alpha$, fix a disconnection $X \setminus \singleton{x_\alpha} = U_\alpha \oplus V_\alpha$ and points $a_\alpha \in U_\alpha \cap C_\alpha$ and $b_\alpha \in V_\alpha \cap C_\alpha$, witnessing that $x_\alpha$ is separating in $X$.

Now consider the space $Y=\set{(a_\alpha,b_\alpha)}:{\alpha < \kappa^+} \subset X^2$. Since $X^2$ has weight $\kappa$, the space $Y$ cannot be discrete. Thus, there exists $(a_\gamma,b_\gamma)$ which is contained in the closure of $Y \setminus \singleton{(a_\gamma,b_\gamma)}$. Considering the card $X \setminus \singleton{x_\gamma} = U_{\gamma} \oplus V_{\gamma}$, we note that for all $\alpha \neq \gamma$ the connected component $C_\alpha$ is completely contained in one of $U_{\gamma}$ or $V_{\gamma}$. Thus, the sets
$$ Y_U=\set{(a_\alpha,b_\alpha)}:{C_\alpha \subset U_{\gamma}} \quad \text{and} \quad Y_V=\set{(a_\alpha,b_\alpha)}:{C_\alpha \subset V_{\gamma}}$$
split $Y \setminus \singleton{(a_\gamma,b_\gamma)}$ into two disjoint sets, and hence $(a_\gamma,b_\gamma)$ will be in the closure of one of them, say, $Y_U$. However, all $b_\alpha$ with $(a_\alpha,b_\alpha) \in Y_U$ lie in $U_{\gamma}$, yielding 
$b_\gamma \in \closure{\set{b_\alpha}:{(a_\alpha,b_\alpha) \in Y_U}} \subset \closure{U}_\gamma= U_{\gamma} \cup \singleton{x_\gamma},$
a contradiction. 
\end{proof}

\begin{mylem}[For locally compact separable metric spaces: Whyburn {\cite[III(9.2)]{why}}]
\label{bigreconstructionresult2}
The number of locally $3$-separating points in a $T_1$ space $X$ does not exceed the weight of $X$.
\end{mylem}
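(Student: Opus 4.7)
The plan is to argue by contradiction, using the two preceding lemmas (number of separating components bounded by weight, number of $3$-cut points bounded by density) as the two main tools, combined with Corollary~\ref{cor:Nsep2} to localize to a basic open set.

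Set $\kappa = w(X)$ and fix a basis $\mathcal{B}$ of $X$ with $|\mathcal{B}| \leq \kappa$. Suppose for a contradiction that $X$ has at least $\kappa^+$ locally $3$-separating points. For each such point $x$, choose an open neighbourhood $U_x$ witnessing that $x$ is $3$-separating in $U_x$, then use $\mathcal{B}$ to pick a basic open set $B_x \in \mathcal{B}$ with $x \in B_x \subseteq U_x$. By Corollary~\ref{cor:Nsep2}, $x$ is still $3$-separating in $B_x$. Since $|\mathcal{B}| \leq \kappa$, the pigeonhole principle yields a single $B \in \mathcal{B}$ such that the set $S = \{x : B_x = B\}$ has cardinality at least $\kappa^+$; every $x \in S$ is thus $3$-separating in $B$.

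Next I transfer the $3$-separation witnessed in $B$ to the components of $B$. For each $x \in S$, the disconnection $B \setminus \{x\} = A_1^x \oplus A_2^x \oplus A_3^x$ with every $A_i^x$ meeting $C_B(x)$ shows simultaneously (i) that $x$ is a separating point of $B$ (via the coarsening $A_1^x \oplus (A_2^x \cup A_3^x)$), and (ii) that
\[
C_B(x) \setminus \{x\} = (C_B(x) \cap A_1^x) \oplus (C_B(x) \cap A_2^x) \oplus (C_B(x) \cap A_3^x)
\]
is a disconnection into three nonempty clopen pieces, so $x$ is a $3$-cut point of the connected space $C_B(x)$.

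Now I apply the two lemmas in sequence to $B$, which is itself a $T_1$ space with $w(B) \leq \kappa$. By Lemma~\ref{lemma02}, the number of components of $B$ that contain a separating point of $B$ is at most $w(B) \leq \kappa$; in particular, the $\kappa^+$ components $\{C_B(x) : x \in S\}$ can collectively take at most $\kappa$ distinct values. A second pigeonhole then produces a component $C$ of $B$ containing at least $\kappa^+$ members of $S$, each of which is a $3$-cut point of $C$ by the observation above. But $C$ is a connected $T_1$ space with $d(C) \leq w(C) \leq w(B) \leq \kappa$, so Lemma~\ref{lemma01} bounds the number of $3$-cut points of $C$ by $\kappa$, a contradiction.

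The main obstacle is conceptual rather than technical: one has to notice that the witness $A_1^x \oplus A_2^x \oplus A_3^x$ inside $B$ simultaneously delivers the data required by Lemma~\ref{lemma02} (a separating point of $B$) and the data required by Lemma~\ref{lemma01} (a $3$-cut point of the component $C_B(x)$), so that the two bounds can be applied in cascade. Once this double use is identified, the proof is just two pigeonhole steps controlled by the basis.
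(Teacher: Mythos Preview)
Your proof is correct and follows essentially the same approach as the paper: pigeonhole into a single basic open set via Corollary~\ref{cor:Nsep2}, then apply Lemma~\ref{lemma02} to bound the number of relevant components, and finally apply Lemma~\ref{lemma01} inside one such component. The only difference is that you spell out explicitly why each $x$ is both a separating point of $B$ and a $3$-cut point of $C_B(x)$, whereas the paper leaves this implicit.
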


\begin{proof}
Fix a base $\script{B}$ of size $w(X)$ and suppose towards a contradiction that there is a collection $S \subset X$ with $\cardinality{S} > w(X)$ such that every point $x \in S$ is locally $3$-separating in $X$. By Corollary~\ref{cor:Nsep2}, for every $x \in S$ there is a basic open neighbourhood $U_x \in \script{B}$ which it $3$-separates. Since $\cardinality{S} > w(X)$, there exists a basic open $U \in \script{B}$ and a subset $S' \subset S$ with $\cardinality{S'} > w(X)$ such that $U=U_x$ for all $x \in S'$.

Applying Lemma~\ref{lemma02} to $U$, we see that there is a component $C$ of $U$ such that $\cardinality{S' \cap C} > w(X)$. But since $d(C)\leq w(X)$, the connected $T_1$ space $C$ now contains more than $d(C)$ many $3$-separating points. Since $C$ is connected, all these $3$-separating points are in fact $3$-cut points, contradicting Lemma~\ref{lemma01}. 
\end{proof}

\begin{mythm}%[Continuum Reconstruction Theorem]
\label{bigreconstructionresult}
Every Hausdorff continuum $X$ with $w(X) < \cardinality{X}$ is reconstructible.
\end{mythm}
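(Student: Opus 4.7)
The plan is to deduce this theorem as an immediate corollary of the Reconstruction Result for Compact Spaces (Theorem~\ref{fincompactrec}): under the hypothesis $w(X) < \cardinality{X}$, I will produce a point $x \in X$ whose card $X \setminus \singleton{x}$ carries a maximal finite compactification---in fact a maximal $1$- or $2$-point compactification. Translated through Corollary~\ref{easypiesie} and Lemma~\ref{fintranslation}, this amounts to finding a non-isolated $x \in X$ with $\localends{x,X} \in \singleton{1,2}$, equivalently a non-isolated point which is not locally $3$-separating in $X$.

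The first step I would carry out is a trivial monotonicity observation: if $x$ is locally $N$-separating in $X$ with witnessing disconnection $A_1 \oplus \cdots \oplus A_N$ and $N \geq 3$, then amalgamating $A_3 \cup \cdots \cup A_N$ into a single clopen piece produces a $3$-separation, so $x$ is also locally $3$-separating. Hence failure of local $3$-separation at $x$ entails failure of local $N$-separation for every $N \geq 3$, which by Lemma~\ref{fintranslation} forces $\localends{x,X} \leq 2$. On the other hand, $X$ is a Hausdorff continuum with at least three points, so it has no isolated points, and every $x \in X$ is trivially locally $1$-separating (using $C_X(x) = X$); hence $\localends{x,X} \geq 1$ for all $x \in X$.

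The second step is the counting argument via the density bound already prepared: Lemma~\ref{bigreconstructionresult2} tells us that the set of locally $3$-separating points of $X$ has cardinality at most $w(X)$. Since $w(X) < \cardinality{X}$, there must exist some $x \in X$ which is not locally $3$-separating. Combining the two steps gives $\localends{x,X} \in \singleton{1,2}$, so by Corollary~\ref{easypiesie} the card $X \setminus \singleton{x}$ admits a maximal $1$- or $2$-point compactification, and Theorem~\ref{fincompactrec} delivers reconstructibility of $X$.

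The main obstacle has already been surmounted in Lemma~\ref{bigreconstructionresult2}: without some control on the number of ``wild'' points it is not at all clear that a card with a maximal finite compactification should exist, and without such a card Theorem~\ref{fincompactrec} gives nothing. Once that cardinality bound is in hand, the present theorem is purely formal bookkeeping with the invariant $\localends{\cdot}$, and I expect the write-up to be just a few lines.
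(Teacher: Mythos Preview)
Your proposal is correct and follows essentially the same route as the paper: apply Lemma~\ref{bigreconstructionresult2} to find a point that is not locally $3$-separating, translate via Lemma~\ref{fintranslation} into a card with a maximal $1$- or $2$-point compactification, and conclude by Theorem~\ref{fincompactrec}. The paper's write-up is just the terse two-line version of what you have spelled out; your additional remarks (monotonicity of locally $N$-separating, absence of isolated points) are the implicit details behind its first sentence.
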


\begin{proof}
By Lemma~\ref{bigreconstructionresult2} and Lemma~\ref{fintranslation}, every Hausdorff continuum $X$ with $w(X) < \cardinality{X}$ has cards with maximal $1$- or $2$-point compactifications. Thus, they are reconstructible by Theorem~\ref{fincompactrec}.
\end{proof}

\begin{mycor}\label{met_subctm}
Every compact Hausdorff space containing a metrizable subcontinuum with non-empty interior is reconstructible.
\end{mycor}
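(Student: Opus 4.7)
My plan is to apply the Reconstruction Result for Compact Spaces (Theorem \ref{fincompactrec}): it suffices to produce a card $X \setminus \singleton{x}$ of $X$ with a maximal finite compactification, and by Lemma \ref{fintranslation} this boils down to finding a point $x \in X$ that is not locally $3$-separating in $X$ (since local $N$-separation for $N \geq 3$ trivially implies local $3$-separation by merging clopen pieces of a witnessing partition). The natural place to hunt for such a point is the interior of the given metrizable subcontinuum $K$.

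If $X$ has any isolated point, Theorem \ref{mythm20} is immediate, so I will assume otherwise; then $K$ is necessarily non-degenerate. Thus $w(K) = \aleph_0$, while $\cardinality{\interiorIn{K}{X}} \geq \continuum$, the latter following from a routine application of the Boundary Bumping Lemma \ref{boundarybumping} showing that every non-empty open subset of a non-degenerate metric continuum contains a non-degenerate subcontinuum. Lemma \ref{bigreconstructionresult2} applied to $K$ then says that at most $w(K) = \aleph_0$ points of $K$ are locally $3$-separating in $K$, so a direct counting argument produces some $x \in \interiorIn{K}{X}$ that is not locally $3$-separating in $K$.

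The hard part, and where the non-empty-interior hypothesis really pulls its weight, is upgrading this to the statement that $x$ is not locally $3$-separating in $X$. I would prove the cleaner claim that for any $x \in \interiorIn{K}{X}$, local $3$-separation in $X$ and in $K$ coincide. For this, one intersects any candidate $X$-open neighbourhood witnessing local $3$-separation in $X$ with a fixed $X$-open set $W \subset K$ containing $x$; by Corollary \ref{cor:Nsep2} this shrinking preserves $3$-separation, and the resulting set is simultaneously $X$-open and $K$-open, while being $3$-separating is an intrinsic property of a subspace. Once this is in place, Lemma \ref{fintranslation} forbids any $N$-point compactification of $X \setminus \singleton{x}$ with $N \geq 3$; the card is locally compact Hausdorff and therefore admits at least the Alexandroff $1$-point compactification, so its maximal finite compactification has $N \in \singleton{1,2}$, and Theorem \ref{fincompactrec} closes the proof. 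The only genuinely delicate step is the coincidence of the two local separation notions on $\interiorIn{K}{X}$; everything else is cardinal arithmetic applied to the lemmas already proven.
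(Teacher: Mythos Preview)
Your approach is essentially the paper's, with one genuine wrinkle. The paper also handles isolated points via Theorem~\ref{mythm20}, counts $\interiorIn{K}{X}$ as having size $\continuum$ (it cites complete metrizability without isolated points rather than Boundary Bumping, but either works), and uses Lemma~\ref{bigreconstructionresult2} to find $x \in \interiorIn{K}{X}$ with $K \setminus \singleton{x}$ lacking a $3$-point compactification.

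The wrinkle is your final step. You invoke Lemma~\ref{fintranslation} to pass from ``$x$ is not locally $3$-separating in $X$'' to ``$X \setminus \singleton{x}$ has no $3$-point compactification'', but Lemma~\ref{fintranslation} is stated and proved only for Hausdorff \emph{continua}, and $X$ is merely compact Hausdorff. The direction you need is the hard one (it rests on Lemma~\ref{fintranslationhelp}, whose proof uses Boundary Bumping and hence connectedness of the ambient space), so this is not just a citation slip. The paper avoids this by never mentioning local separation in $X$ at all: since $K$ is a compact neighbourhood of $x$, Lemma~\ref{endswelldef} gives $\localends{x,X} = \localends{x,K}$ directly, and Lemma~\ref{fintranslation} is applied only to the continuum $K$. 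Your argument is easily repaired along the same lines: from ``not locally $3$-separating in $K$'' apply Lemma~\ref{fintranslation} to $K$ to get $\localends{x,K} \leq 2$, then cite Lemma~\ref{endswelldef}. Your detour through Corollary~\ref{cor:Nsep2} and local separation in $X$ is then unnecessary.
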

\begin{proof}
Let $X$ be a compact Hausdorff space and $D$ a metrizable subcontinuum with non-empty interior. If $\interior{D}$ contains isolated points, then $X$ is reconstructible by Theorem~\ref{mythm20}. Otherwise, $\interior{D}$ is completely metrizable without isolated points \cite[4.3.23]{Eng}, so has cardinality $\cont$ \cite[4.5.5]{Eng}. Since $D$ has countable weight, by Lemma~\ref{bigreconstructionresult2}, there is $x \in \interior{D}$ such that $D \setminus \singleton{x}$ does not have a 3-point compactification. Lemma~\ref{endswelldef} implies that $X \setminus \singleton{x}$ does not have a $3$-point compactification. Thus, $X$ is reconstructible by Theorem~\ref{fincompactrec}
 \end{proof}

\begin{mycor}\label{cptmet_ctbl_conn}
Every compact metrizable space with at most countably many components is reconstructible.
\end{mycor}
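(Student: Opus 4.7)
The plan is to reduce the statement directly to Corollary~\ref{met_subctm} by extracting a metrizable subcontinuum with non-empty interior from the hypothesis of having only countably many components. Let $X$ be a compact metrizable space, and enumerate its components as $\Set{C_n : n \in \N}$. Each $C_n$ is closed in $X$ (since components in Hausdorff spaces are closed) and, being a closed subspace of a compact metric space, each $C_n$ is itself compact and metrizable.

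The first step is a Baire category argument. Since $X$ is compact metrizable, it is a Baire space. If every $C_n$ had empty interior in $X$, then $X = \Union_n C_n$ would be a countable union of closed nowhere dense sets, contradicting the Baire property. So I would pick some index $n_0$ for which $C_{n_0}$ has non-empty interior in $X$. By definition of component $C_{n_0}$ is connected, and we have already observed that it is compact and metrizable, so $C_{n_0}$ is a metrizable subcontinuum of $X$ with $\interior{C_{n_0}}\neq \emptyset$.

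At this point Corollary~\ref{met_subctm} applies verbatim and gives that $X$ is reconstructible, so the proof is complete. I do not anticipate any real obstacle: the degenerate situation where $C_{n_0}$ happens to be a single point simply means that $X$ has an isolated point, and this subcase is already absorbed by Corollary~\ref{met_subctm} (which in turn reduces it to Theorem~\ref{mythm20}). Thus no separate argument is required, and the only genuine content of the proof is the one-line Baire observation that forces some component to have non-empty interior.
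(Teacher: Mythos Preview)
Your proof is correct and follows exactly the paper's approach: use the Baire Category Theorem to find a component with non-empty interior, then invoke Corollary~\ref{met_subctm}. The paper's version is simply terser, omitting the side remarks about closedness of components and the degenerate isolated-point case.
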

\begin{proof}
Suppose $X$ is compact, metrizable and $X=\bigcup_n C_n$ where $C_n$ are (closed) connected components. By the Baire Category theorem some $C_n$ has non-empty interior,  so we can apply Corollary~\ref{met_subctm}.
\end{proof}

\section{Examples and Questions} 
\label{section5}

\subsection{Graphs and graph-like spaces}
That finite (in other words, compact) topological graphs are reconstructible is immediate from either of the two Corollaries (\ref{met_subctm} and \ref{cptmet_ctbl_conn}) above. 

A \emph{graph-like space} is a metrizable space $X$ with a $0$-dimensional subspace $V$ (the vertices) of $X$ so that every component of $X\setminus V$ (the edges) is an open subset of $X$ homeomorphic to the open unit interval and whose boundary in $X$ consists of only one or two points (see e.g.\ \cite{graphlike}). 
Any finite graph viewed as a 1-dimensional cell-complex is a compact graph-like space, and so is the 1-point or the Freudenthal compactification of an infinite, locally finite graph.
\begin{mycor}\label{graph_like}
Every compact graph-like space $X$ with at least one edge is reconstructible.
\end{mycor}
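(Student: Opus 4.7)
The plan is to reduce Corollary \ref{graph_like} immediately to Corollary \ref{met_subctm}. Since $X$ is by assumption graph-like, it is metrizable and compact, and by hypothesis it has at least one edge $e$. By the definition of a graph-like space, $e$ is an open subset of $X$ homeomorphic to $(0,1)$ whose boundary in $X$ consists of one or two vertices from $V$. The natural candidate for a metrizable subcontinuum with non-empty interior is the closure $\closure{e}$ of this edge.

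First, I would verify that $\closure{e}$ is indeed a metrizable subcontinuum of $X$. Compactness is automatic since $X$ is compact Hausdorff and $\closure{e}$ is closed. Metrizability is inherited from $X$. Connectedness follows from the fact that $e\cong (0,1)$ is connected and closures of connected sets are connected. Finally, $\closure{e}$ has non-empty interior in $X$, because $e$ itself is a non-empty open subset of $X$ contained in $\closure{e}$. Hence $\closure{e}$ witnesses the hypothesis of Corollary \ref{met_subctm}, and the latter gives reconstructibility of $X$ immediately.

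There is essentially no obstacle here; the whole point of Corollary \ref{met_subctm} was to provide exactly this kind of local sufficient condition for reconstructibility of compact Hausdorff spaces. The only thing to double-check is the unpacking of the definition of a graph-like space, namely that edges are open in $X$ (so that $\closure{e}$ has non-empty interior) and homeomorphic to $(0,1)$ with boundary of size $1$ or $2$ (so that $\closure{e}$ is in fact a continuum rather than a more complicated closure). Both of these are built into the definition, so nothing further is needed. If one preferred to use Corollary \ref{cptmet_ctbl_conn} instead, one could alternatively observe directly that a compact graph-like space has at most countably many edges and vertices forming its connected components, but the route via Corollary \ref{met_subctm} is the cleanest.
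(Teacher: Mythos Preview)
Your proof is correct and follows essentially the same approach as the paper: both reduce to Corollary~\ref{met_subctm} by exhibiting a metrizable subcontinuum with non-empty interior coming from an edge. The only cosmetic difference is that the paper takes a small closed subarc around a point of the edge (a compact neighbourhood homeomorphic to $[0,1]$), whereas you take the full closure $\closure{e}$; either choice works.
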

To see this, consider a point $x$ lying on one of the edges. Since $x$ has a compact neighbourhood homeomorphic to $[0,1]$, Corollary~\ref{met_subctm} applies.

However, without compactness the techniques for topological reconstruction presented in this paper fail, and an interesting open problem presents itself.
\begin{myquest}
\label{allgraphsreconstructible?}
Is every (locally finite) topological graph topologically reconstructible?
\end{myquest}

\subsection{Non-metrizable continua}

First let us note that numerous natural examples of non-metrizable continua are reconstructible as a consequence of Theorem~\ref{bigreconstructionresult}.
For example the Stone-\v{C}ech compactifications, $\beta \R$ and $\beta \mathbb{H}$, of the reals $\R$ and half-line $\mathbb{H}=[0,1)$, are reconstructible by Corollary~\ref{met_subctm}. But also the Stone-\v{C}ech remainder  $\Hstar = \beta \mathbb{H} \setminus \mathbb{H}$ of $\mathbb{H}$---which contains no non-trivial metrizable subcontinua--- is reconstructible: Since $w(\Hstar)= \cont  < 2^\cont = \cardinality{\Hstar}$, Theorem~\ref{bigreconstructionresult} applies. Similarly, $\Rstar = \beta \R \setminus \R$, is reconstructible.

Recall that always $w(X) \leq \cardinality{X}$ for compact Hausdorff spaces $X$ \cite[3.1.21]{Eng}. Thus, in light of Theorem \ref{bigreconstructionresult}, it remains to investigate whether Hausdorff continua  are reconstructible when weight equals cardinality. 

\begin{myquest}
\label{nonmetrHausdorffrecosntructible?}
Is every Hausdorff continuum reconstructible? Even stronger, 
does every Hausdorff continuum $X$ with $w(X) =\cardinality{X}$ have a card with a maximal finite compactification?
\end{myquest}

We now consider some examples of Hausdorff continua $X$ with $w(X)=\cardinality{X}$, and show that in each case they are reconstructible.

\subsubsection*{Sub cut-points and indecomposable continua}
 Recall that a point of a Hausdorff continuum is called \emph{sub N-cut point} if it is an $N$-cut point of some subcontinuum. The following observation is immediate from the definitions.
\begin{mylem}
\label{subcutpoints}
Let $X$ be a Hausdorff continuum. If $x$ is locally $N$-separating in $X$ then $x$ is a sub $N$-cut point of $X$. \qed
\end{mylem}

Recall that a continuum is said to be \emph{indecomposable} if it cannot be written as the union of two proper subcontinua and that it is \emph{hereditarily indecomposable} if every subcontinuum is indecomposable. A well-known example of a hereditarily indecomposable continuum is given by the \emph{pseudoarc} \cite{pseudoarc}.

\begin{mycor}
Every card of a hereditarily indecomposable Hausdorff continuum has a maximal $1$-point compactification. Hence every hereditarily indecomposable Hausdorff continuum is reconstructible.
\end{mycor}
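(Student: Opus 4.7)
The plan is to apply the Reconstruction Result for Compact Spaces (Theorem~\ref{fincompactrec}) via the characterisation of Lemma~\ref{fintranslation}. Fix a hereditarily indecomposable Hausdorff continuum $X$ and a point $x \in X$. Since $X$ is a connected Hausdorff space with at least three points, $x$ is non-isolated, so the card $X \setminus \Set{x}$ is non-compact, locally compact and Hausdorff, and hence admits an Alexandroff $1$-point compactification. By Lemma~\ref{fintranslation}, maximality of this $1$-point compactification is equivalent to showing that $x$ is not locally $2$-separating in $X$.

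To obtain a contradiction I would assume $x$ is locally $2$-separating and use Corollary~\ref{cor:Nsep2} to shrink the witnessing neighbourhood to a compact one $U$, so that $U \setminus \Set{x} = U_1 \oplus U_2$ with both $U_i$ meeting $C_U(x)$. Then $C := C_U(x)$, being a component of the compact set $U$, is a subcontinuum of $X$ containing $x$, and $C \setminus \Set{x} = (C \cap U_1) \oplus (C \cap U_2)$ is a non-trivial clopen decomposition; that is, $x$ is a cut point of the subcontinuum $C$. Hereditary indecomposability forces $C$ itself to be indecomposable, so the problem reduces to the classical fact that an indecomposable Hausdorff continuum admits no cut point. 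This last step is the only genuinely non-routine point and will be the main obstacle.

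I would establish the cut-point fact directly. Suppose $K$ is an indecomposable Hausdorff continuum and $K \setminus \Set{x} = V_1 \oplus V_2$ with $V_1, V_2$ non-empty and open in $K$. Then $\closure{V_i} = V_i \cup \Set{x}$ are proper closed subsets of $K$ whose union is $K$, so by indecomposability at least one of them --- say $\closure{V_1}$ --- is disconnected. Writing $\closure{V_1} = A \oplus B$ as a clopen decomposition with $x \in A$ and $B \neq \emptyset$, we have $B \subseteq V_1$; since $B$ is $\closure{V_1}$-open and $x \notin B$, a routine open-set manipulation produces an open $W$ in $K$ with $B = W \cap V_1$, so $B$ is open in $K$. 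Being also closed in $K$ (as a closed subset of a closed set), $B$ is a non-trivial clopen subset of the connected space $K$, a contradiction.

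Assembling the pieces: no point of $X$ is locally $2$-separating, so by Lemma~\ref{fintranslation} every card has a maximal $1$-point compactification, and reconstructibility of $X$ follows from Theorem~\ref{fincompactrec}.
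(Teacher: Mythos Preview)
Your proposal is correct and follows essentially the same route as the paper: reduce to showing that no point is locally $2$-separating, pass to a subcontinuum on which the point would be a cut point, and invoke (hereditary) indecomposability. The only difference is cosmetic---you unpack Lemma~\ref{subcutpoints} inline and supply a proof that indecomposable continua have no cut points, whereas the paper simply records both facts as observations.
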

\begin{proof}
Observe that an indecomposable continuum cannot have cut points. Thus, a hereditarily indecomposable continuum cannot contain sub cut points, and hence does not contain locally separating points by Lemma~\ref{subcutpoints}. 
\end{proof}

\subsubsection*{Linearly ordered spaces} Every linearly ordered set carries a natural topology, see e.g.\ \cite[1.7.4]{Eng}. Every point in a compact connected linearly ordered space is at most a sub $2$-cut point. Thus, compact connected linearly ordered spaces are reconstructible.
Examples with $w(X)=\cardinality{X}$ are given by the \emph{long line} \cite[3.12.19]{Eng} on $\cont + 1$, by the lexicographically ordered square or (consistently) by compact connected Souslin lines. 

\subsubsection*{Cones over continua}
Gary Gruenhage pointed out to us a further class of continua with $w(X)=\cardinality{X}$. Starting with a compact Hausdorff space $X$, the \emph{cone over} $X$ is the quotient $cone(X) = (X \times I) / (X \times \singleton{1})$ for $I=[0,1]$. If $X$ has cardinality and weight $\kappa \geq \cont$, then $cone(X)$ is a Hausdorff continuum where weight equals cardinality. However, using Lemma~\ref{subcutpoints}, any point of the form $(x,\frac12)$ is at most locally 2-separating, and hence all cones over compact Hausdorff spaces are reconstructible.

\subsubsection*{Cartesian products}
Another method of building continua with cardinality equalling weight is to use Cartesian products. We show these are always reconstructible.
\begin{mylem}
\label{noproductcuts}
Let $\cardinality{S} \geq 2$ and for all $s \in S$ suppose that $X_s$ is a non-trivial connected space. Then $X=\prod_{s \in S} X_s$ has no cut points.
\end{mylem}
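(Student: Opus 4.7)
The plan is to reduce the statement to a two-factor product and then apply the standard ``axis cross'' construction.

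For the reduction, use $\cardinality{S} \geq 2$ to fix a partition $S = S_0 \sqcup S_1$ with both pieces non-empty, and set $Y_i = \prod_{s \in S_i} X_s$. Each $Y_i$ is a product of non-trivial connected spaces, so each $Y_i$ is itself non-trivial and connected (by the classical theorem that any product of connected spaces is connected, proved by noting that the set of points differing from a fixed base point in only finitely many coordinates is a dense connected subset built up from finite products of connected spaces). Since $X$ is homeomorphic to $Y_0 \times Y_1$, it suffices to prove the statement for a product of just two non-trivial connected spaces.

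So suppose $X = Y \times Z$ with $Y$ and $Z$ non-trivial and connected, and let $p = (y^*, z^*) \in X$. The plan is to exhibit, for every $(y,z) \neq p$, a connected subset of $X \setminus \singleton{p}$ containing $(y,z)$ that also meets a fixed connected ``skeleton'' $A \cup B$. Pick $y_0 \in Y \setminus \singleton{y^*}$ and $z_0 \in Z \setminus \singleton{z^*}$, which exist since $Y$ and $Z$ each have at least two points. Set
\[
  A = Y \times \singleton{z_0}, \qquad B = \singleton{y_0} \times Z.
\]
Both are homeomorphic to connected spaces, lie in $X \setminus \singleton{p}$ (their second, resp.\ first, coordinate avoids $p$), and intersect at $(y_0, z_0)$, so $A \cup B$ is connected. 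Now given $(y,z) \neq p$, either $z \neq z^*$, in which case $Y \times \singleton{z}$ is a connected subset of $X \setminus \singleton{p}$ meeting $B$ at $(y_0, z)$; or $y \neq y^*$, in which case $\singleton{y} \times Z$ is a connected subset of $X \setminus \singleton{p}$ meeting $A$ at $(y, z_0)$. Either way, $(y,z)$ lies in the component of $A \cup B$ in $X \setminus \singleton{p}$, so the latter is connected.

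There is no real obstacle here: once one spots that associativity of products lets us pretend $S$ has exactly two elements, the argument becomes the familiar cross argument. The only subtlety is that the partition trick must come before anything else, since for infinite $S$ one cannot simply ``change one coordinate at a time'' from a generic point to a reference point.
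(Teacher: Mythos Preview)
Your proof is correct but takes a different route from the paper's. The paper first declares the two-factor case ``straightforward'' (without spelling out the cross argument), extends to all finite $S$ by induction, and then treats infinite $S$ by a separate argument: given $x$, choose $y$ differing from $x$ in \emph{every} coordinate, and observe that the $\sigma$-product at $y$ (the set of points differing from $y$ in only finitely many coordinates) is a dense connected subset of $X$ that misses $x$, whence $X \setminus \singleton{x}$ is connected. Your approach is more uniform: by partitioning $S$ into two non-empty pieces and collapsing each into a single factor via associativity of products, you reduce everything to the two-factor case at once, and you also make the cross argument explicit. This buys a single argument covering all cardinalities of $S$, at the modest cost of invoking the general fact that arbitrary products of connected spaces are connected---a fact the paper also relies on, via the $\sigma$-product. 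As a side remark, your closing comment that for infinite $S$ one ``cannot simply change one coordinate at a time'' is precisely what the paper's $\sigma$-product argument does successfully carry out, so that avenue is in fact available.
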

\begin{proof}
The case of $\cardinality{S}=2$ is straightforward, and by induction we get the result for all finite products. So suppose that $S$ is infinite and pick a point $x$ of $X$. We claim that $X \setminus \singleton{x}$ is connected. Pick $y$ in $X$ such that $x_s \neq y_s$ for all coordinates $s$. Recall from the proof that connectedness is productive \cite[6.1.15]{Eng} that the $\sigma$-product at $y$, $\sigma(y,X) = \set{z \in X}:{\cardinality{\set{s}:{z_s\neq y_s}} \text{ is finite}}$, 
is connected and dense in $X$. Since $S$ was infinite, $x$ is not contained in the $\sigma$-product at $y$ and hence it follows that $X \setminus \singleton{x}$ has a dense connected subset, so is itself connected.
\end{proof}

\begin{mythm}
\label{noproductcuts2}
Let $\cardinality{S} \geq 2$ and for all $s \in S$ suppose that $X_s$ is a non-trivial Hausdorff continuum. Then $\prod_{s \in S} X_s$ contains no locally separating points.
\end{mythm}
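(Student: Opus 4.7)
The plan is to suppose $x = (x_s) \in X = \prod_s X_s$ is locally separating and derive a contradiction from Lemma~\ref{noproductcuts} applied to the component of $x$ in a suitably small neighbourhood. First, using Corollary~\ref{cor:Nsep2}, I would shrink the witnessing neighbourhood to a basic open set $W = \prod_s W_s$, where $W_s = X_s$ for all $s$ outside some finite set $F \subseteq S$. If $x$ is separating in $W$, witnessed by a disconnection $W \setminus \singleton{x} = A_1 \oplus A_2$ with both $A_i$ meeting the component $C_W(x)$, then intersecting with $C_W(x)$ yields a clopen partition of $C_W(x) \setminus \singleton{x}$, so $x$ is a cut point of the connected space $C_W(x)$.

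Next, I would identify $C_W(x)$ explicitly: continuous projections send connected sets to connected sets, and products of connected sets are connected, so $C_W(x) = \prod_{s \in S} C_{W_s}(x_s)$. For $s \notin F$ this factor equals $X_s$, which is non-trivial by hypothesis. The main technical step is to argue that for each $s \in F$ the factor $C_{W_s}(x_s)$ is still non-trivial. If $W_s = X_s$ this is immediate, while if $W_s \subsetneq X_s$ the Boundary Bumping Lemma~\ref{boundarybumping}, applied inside the ambient Hausdorff continuum $X_s$, says that $\closure{C_{W_s}(x_s)}$ meets the boundary of $W_s$; since $x_s$ already lies in the interior of $W_s$, this forces $C_{W_s}(x_s)$ to contain a point distinct from $x_s$.

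With all factors of $C_W(x)$ non-trivial connected Hausdorff spaces and $\cardinality{S} \geq 2$, Lemma~\ref{noproductcuts} gives that $C_W(x)$ has no cut points, contradicting the conclusion of the first step. The main obstacle I anticipate is exactly the non-triviality verification in the second paragraph: one needs that the componentwise components $C_{W_s}(x_s)$ do not collapse to $\singleton{x_s}$ even when $W_s$ is arbitrarily small, and this is precisely where connectedness of each ambient factor $X_s$ (rather than mere connectedness of the product) enters the argument, through the Boundary Bumping Lemma.
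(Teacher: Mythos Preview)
Your argument is correct and follows essentially the same architecture as the paper's proof: reduce to a basic neighbourhood, identify the component of $x$ there as a product of the coordinatewise components, use Boundary Bumping to see each factor is non-trivial, and invoke Lemma~\ref{noproductcuts} for the contradiction.

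One point worth noting: the paper passes to the \emph{closed} basic neighbourhood $\closure{U}$ and then uses the \v{S}ura-Bura Lemma~\ref{SuraBura} (which needs compactness) to verify the identity $C_{\closure{U}}(x)=\prod_s C_{\closure{U_s}}(x_s)$. Your projection argument --- $\pi_s(C_W(x))$ is connected in $W_s$ and contains $x_s$, hence lies in $C_{W_s}(x_s)$ --- establishes the same identity directly for the open $W$ without any appeal to compactness or \v{S}ura-Bura, which is a mild but genuine simplification.
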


\begin{proof}
Let $x \in X$ and suppose for a contradiction there is a basic open neighbourhood $U = \bigcap_{s \in F} \pi^{-1}_s(U_s)$ of $x$ (for $F \subset S$ finite) such that $x$ is a cut-point of $C_{\closure{U}}(x)$. Recall that $\closure{U}=\bigcap_{s \in F} \pi^{-1}_s( \closure{U_s})$ by \cite[2.3.3]{Eng}. 

\begin{myclmn}
We have $C_{\closure{U}}(x) = \bigcap_{s \in F} \pi^{-1}_s( C_{\closure{U_s}}(x_s))$.
\end{myclmn}

First, since connectedness is productive \cite[6.1.15]{Eng}, the set $\bigcap_{s \in F} \pi^{-1}_s( C_{\closure{U_s}}(x_s))$ is a connected subset of $\closure{U}$ containing the point $x$. Conversely, for every point $y \in \closure{U} \setminus \bigcap_{s \in F} \pi^{-1}_s( C_{\closure{U_s}}(x_s))$ we have $y_s \notin C_{\closure{U_s}}(x_s)$ for some index $s$, and therefore by the \v{S}ura-Bura Lemma \ref{SuraBura} there is a clopen subset $V$ of $\closure{U_i}$ separating $y_s$ from $C_{\closure{U_s}}(x_s)$. It follows that $\pi^{-1}_s(V) \cap \closure{U}$ is a clopen subset of $U$ separating $y$ from $x$. Thus, $y \notin C_{\closure{U}}(x)$, completing the proof of the claim.

So now, since $x$ is a cut-point of its component in $\closure{U}$, the claim implies that $x$ is a cut-point of the space $\bigcap_{s \in F} \pi^{-1}_s( C_{\closure{U_s}}(x_s))$. However, since every $C_{\closure{U_s}}(x_s)$ is non-trivial by the Boundary Bumping Lemma \ref{boundarybumping}, the component of $x$ in $\closure{U}$ is a non-trivial product of connected spaces, and therefore does not have cut-points by Lemma \ref{noproductcuts}, a contradiction.
\end{proof}

\begin{mycor}
Every non-trivial product of non-trivial Hausdorff continua is reconstructible. \qed
\end{mycor}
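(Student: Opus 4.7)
The plan is to apply Theorem~\ref{fincompactrec} using Theorem~\ref{noproductcuts2} together with Lemma~\ref{fintranslation}. Let $X = \prod_{s \in S} X_s$ be a non-trivial product of non-trivial Hausdorff continua. Then $X$ is itself a Hausdorff continuum, by Tychonoff's theorem and the productivity of connectedness \cite[6.1.15]{Eng}. Moreover, each factor $X_s$, being a connected Hausdorff space with at least two points, has no isolated points, and a straightforward coordinate-swap argument shows that $X$ has no isolated points either. Consequently $X \setminus \singleton{x}$ is locally compact Hausdorff and non-compact for every $x \in X$, so its Alexandroff compactification is a genuine $1$-point compactification.

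The key input is that Theorem~\ref{noproductcuts2} rules out any higher finite compactification of any card. Indeed, if some $x \in X$ were locally $N$-separating for $N \geq 2$ via an open neighbourhood $U$ with $U \setminus \singleton{x} = A_1 \oplus \cdots \oplus A_N$ and each $A_i$ meeting $C_U(x)$, then grouping the clopen pieces as $A_1$ and $A_2 \cup \cdots \cup A_N$ would produce a disconnection of $U \setminus \singleton{x}$ into two clopen subsets each of which still meets $C_U(x)$; this would make $x$ a locally separating point of $X$, contradicting Theorem~\ref{noproductcuts2}. Hence no point of $X$ is locally $N$-separating for any $N \geq 2$, and Lemma~\ref{fintranslation} implies that no card of $X$ admits an $N$-point compactification for $N \geq 2$.

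Combining the two points, the Alexandroff compactification of every card $X \setminus \singleton{x}$ is already a \emph{maximal} finite compactification (a $1$-point compactification, and no larger finite compactification exists). In particular $X$ has a card with a maximal finite compactification, so the Reconstruction Result for Compact Spaces (Theorem~\ref{fincompactrec}) applies and $X$ is reconstructible. I do not foresee any real obstacle: all of the geometric work was done in Theorem~\ref{noproductcuts2}, and the present argument is just a direct bookkeeping application of the machinery built up in Sections~\ref{section3} and~\ref{section4}.
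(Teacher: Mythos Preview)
Your argument is correct and follows exactly the route the paper intends: Theorem~\ref{noproductcuts2} gives that no point is locally separating, hence (via Lemma~\ref{fintranslation}) every card has a maximal $1$-point compactification, and Theorem~\ref{fincompactrec} then yields reconstructibility. The paper records this corollary with a bare \qed\ for precisely this reason; your write-up simply spells out the implicit steps.
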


%--- BIBLIOGRAPHY---
	  
\end{document}